\documentclass{amsart}
\usepackage{latexsym,amsmath,amssymb}
%\documentstyle[12pt]{article}
% ----------------------------------------------------------------
%\vfuzz2pt \hfuzz2pt
% THEOREMS -------------------------------------------------------
\newtheorem{thm}{Theorem}[section]
\newtheorem{cor}[thm]{Corollary}

\newtheorem{lem}[thm]{Lemma}
\newtheorem{prop}[thm]{Proposition}
\theoremstyle{definition}\newtheorem{defn}[thm]{Definition}
\theoremstyle{remark}

\numberwithin{equation}{section}
% MATH -----------------------------------------------------------

\begin{document}

\title[]
{Hyperbolic composition operators on Orlicz spaces}

\author{\sc\bf Y. Estaremi}
\address{\sc Y. Estaremi}
\email{y.estaremi@gu.ac.ir}
%\email{nareen bamerni@yahoo.com}
%\email{saeedeh.shamsi@gmail.com}

\address{Department of Mathematics, Faculty of Sciences, Golestan University, Gorgan, Iran.}
%\address{Department of Mathematics, University of Duhok, Kurdistan region, Iraq.}
%\address{Department of Mathematics, Payame Noor University(PNU), p. o. box: 19395-4697, Tehran, Iran.}

\thanks{}

\thanks{}

\subjclass[2020]{37D05, 47A16}

\keywords{Shadowing property, Hyperbolicity, Dissipative systems, Orlicz spaces, Composition operators.}

\date{}

\dedicatory{}

\commby{}

%%% ----------------------------------------------------------------------
\begin{abstract}
 In the present paper we provide some equivalent conditions for composition operators to have shadowing property on Orlicz space $L^{\Phi}(\mu)$. Also, we obtain that for the composition operators on Orlicz spaces the notions of generalized hyperbolicity and the shadowing property coincide. The results of this paper extends similar results on $L^p$-spaces.
\end{abstract}

\maketitle

\section{ \sc\bf Introduction and Preliminaries}
%The dynamics of bounded linear operators on infinite-dimensional Banach spaces have been studied for decades and applied to obtain meaningful results in other areas of mathematics such as number
%theory, ergodic theory, and geometry of Banach spaces. The study of composition operators in linear
%dynamics has been considered by many researchers because such class is concrete and large enough to
%generate various examples and counterexamples.
%

In last two decades mathematicians have obtained plenty of interesting results in linear dynamics concerning dynamical properties such as transitivity, mixing, Li-Yorke and many others. Finding relations between expansivity, hyperbolicity,the shadoing property and structural stability is one of important questions in linear dynamics. Some classical results that are obtained on the relations between expansivity, hyperbolicity,the shadoing property and structural stability can be find in \cite{ad,eh,hed,ma,sm,wal}. In \cite{ddm} the authors have shown that for the class of bounded composition operators on $L^p(\mu)$, the
notion of generalized hyperbolicity and the shadowing property coincide. Indeed they provide some sufficient
and necessary conditions for composition operators to have the shadowing property.\\

 An Orlicz space $L^{\Phi}$ is known to be a natural generalization of $L^p$ spaces and has been considered
in various areas such as probability theory, partial differential equations, and mathematical finance.
 In this paper we extend these results for composition operators to Orlicz spaces. Indeed we provide some equivalent conditions for composition operators to have shadowing property on Orlicz space $L^{\Phi}(\mu)$. Also, we obtain that for the composition operators on Orlicz spaces the notions of generalized hyperbolicity and the shadowing property coincide.

Here for the convenience of the reader, we recall some essential facts on Orlicz spaces for later use. For more details on Orlicz spaces,  see \cite{kr,raor}.

A function $\Phi:\mathbb{R}\rightarrow [0,\infty]$ is called a \textit{Young's function}  if $\Phi$ is   convex, $\Phi(-x)=\Phi(x)$, $\Phi(0)=0$ and $\lim_{x\rightarrow \infty} \Phi(x)=+\infty$. With each \textit{Young's function} $\Phi$, one can associate another convex function $\Psi:\mathbb{R} \rightarrow [0, \infty]$ having similar properties defined by

$$\Psi(y)=\sup\{x\mid y\mid-\Phi(x): x\geq0\}, \ \ \ \ \ y\in \mathbb{R}.$$

The function $\Psi$ is called the \textit{complementary function} to $\Phi$. It follows from the definition that $\Psi(0)=0$, $\Psi(-y)=\Psi(y)$ and $\Psi(.)$ is a convex increasing function satisfying $\lim_{y\rightarrow \infty} \Psi(y)=+\infty$.

Let $\Phi$ be a \textit{Young's function}. Then we say $\Phi$ satisfies the
$\Delta_{2}$-condition or $\Phi$ is $\Delta_2$-regular, if $\Phi(2x)\leq
K\Phi(x) \; ( x\geq x_{0})$  for some constants
$K>0$ and $x_0>0$. Also, $\Phi$ satisfies the
$\Delta_{2}$-condition globally if $\Phi(2x)\leq
K\Phi(x) \; ( x\geq 0)$  for some
$K>0$. Also, $\Phi$ is said to satisfy the
$\Delta'$ condition, if $\exists c>0$
$(b>0)$ such that
$$\Phi(xy)\leq c\Phi(x)\Phi(y), \ \ \ x,y\geq x_{0}\geq 0.$$

If $x_{0}=0$, then it is said to hold
globally. If $\Phi\in \Delta'$, then $\Phi\in
\Delta_{2}$.\\

For a given complete $\sigma$-finite measure space$(X,\mathcal{F},\mu)$, let $L^0(\mathcal{F})$  be the linear space of  equivalence classes of $\mathcal{F}$-measurable real-valued functions on $X$, that is, we identify functions equal $\mu$-almost everywhere on $X$. The support $S(f)$ of a
measurable function $f$ is defined by $S(f):=\{x\in X : f(x)\neq
0\}$.

For a \textit{Young's function} $\Phi$, let $\rho_{\Phi}:L^{\Phi}(\mu)\rightarrow \mathbb{R}^{+}$ such that $\rho_{\Phi}(f)=\int_{X}\Phi(f)d\mu$ for all $f\in L^{\Phi}(\mu)$. Then the space
$$
L^{\Phi}(\mu)=\left\{f\in L^0(\mathcal{F}):\exists k>0, \rho_{\Phi}(kf)<\infty\right\}
$$
is called an Orlicz space. Define the functional

$$N_{\Phi}(f)=\inf \{k>0:\rho_{\Phi}(\frac{f}{k})\leq 1\}.$$

The functional $N_{\Phi}(.)$ is a norm on $L^{\Phi}(\mu)$ and is called \textit{guage norm}(or Luxemburge norm). Also, $(L^{\Phi}(\mu), N_{\Phi}(.))$  is a normed linear space. If almost every where equal functions are identified, then $(L^{\Phi}(\mu), N_{\Phi}(.))$ is a Banach space, the basic measure space $(X,\mathcal{F},\mu)$ is unrestricted. Hence every element of $L^{\Phi}(\mu)$ is a class of measurable functions that are almost every where equal. Also, there is another norm on $L^{\Phi}(\mu)$, defined as follow:
\begin{equation}\label{e1}
\|f\|_{\Phi}=\sup\{\int_{X}\mid fg\mid d\mu: g\in B_{\Psi}\}=\sup\{\mid\int_{X}fg d\mu\mid: g\in B_{\Psi}\},
\end{equation}
in which $B_{\Psi}=\{g\in L^{\Psi}(\mu): \int_{X}\Psi(\mid g\mid )d\mu\leq 1\}$.
  The norm $\|.\|_{\Phi}$ is called \textit{Orlicz norm}. For any $f\in L^{\Phi}(\mu)$, $\Phi$ being a Young function, we have\\
\begin{equation}\label{e2}
N_{\Phi}(f)\leq \|f\|_{\Phi}\leq2N_{\Phi}(f).
\end{equation}
For every $F\in\mathcal{F}$ with $0<\mu(F)<\infty$, we have $N_{\Phi}(\chi_F)=\frac{1}{\Phi^{-1}(\frac{1}{\mu(F)})}$.\\

Throughout this paper, $(X,\mathcal{F},\mu)$ will be a measure space, that is, $X$ is a set, $\mathcal{F}$ is a sigma algebra on $X$ and $\mu$ is a positive measure on $\mathcal{F}$. Also, we assume that $\varphi:X\rightarrow X$ is a non-singular measurable transformation, that is, $\varphi^{-1}(F)\in \mathcal{F}$, for every $F\in \mathcal{F}$ and $\mu(\varphi^{-1}(F))=0$, if $\mu(F)=0$. Moreover, for the non-singular measurable transformation $\varphi:X\rightarrow X$, if there exists a positive constant $c$ for which
\begin{equation}\label{ec}
\mu(\varphi^{-1}(F))\leq c\mu(F), \ \ \ \ \text{for every} F\in \mathcal{F},
\end{equation}
then the linear operator
$$C_{\varphi}:L^{\Phi}(\mu)\rightarrow L^{\Phi}(\mu), \ \  \ \ \ \ C_{\varphi}(f)=f\circ\varphi,$$
is well-defined and continuous on the Orlicz space $L^{\Phi}(\mu)$ and is called composition operator. For more details on composition operators on Orlicz spaces one can refer to \cite{chkm}.

\section{\sc\bf Hyperbolicity and shadowing property}

In this section first we rewrite the definitions of wandering set, dissipative system and dissipative system of bounded distortion in Orlicz spaces, that were defined for $L_p$ spaces in \cite{ddm}.

\begin{defn}
Let $(X, \mathcal{F}, \mu)$ be a measure space and $\varphi:X \rightarrow X$ be an invertible non-singular transformation. A measurable set $W\subseteq X$ is called a wandering set for $\varphi$ if the sets $\{\varphi^{-n}(W)\}_{n\in\mathbb{Z}}$ are pair-wise disjoint.
\end{defn}

\begin{defn}
Let $(X, \mathcal{F}, \mu)$ be a measure space and $\varphi:X\rightarrow X$ be an invertible non-singular transformation. The quadruple $(X, \mathcal{F}, \mu, \varphi)$ is called
\begin{itemize}
  \item a dissipative system generated by $W$, if $X=\dot{\cup}_{k\in \mathbb{Z}}\varphi^k(W)$ for some $W\in \mathcal{F}$ with $0<\mu(W)<\infty$ (the symbol $\dot{\cup}$ denotes pairwise disjoint union);
  \item a dissipative system, of bounded distortion, generated by $W$, if there exists $K>0$, such that
  \begin{equation}\label{edis}
  \frac{1}{K}N_{\Phi}(C^k_{\varphi}(\chi_W))N_{\Phi}(\chi_F)\leq N_{\Phi}(C^k_{\varphi}(\chi_F))N_{\Phi}(\chi_W)\leq KN_{\Phi}(C^k_{\varphi}(\chi_W))N_{\Phi}(\chi_F),
  \end{equation}
  for all $k\in \mathbb{Z}$ and $F\in \mathcal{F}_W=\{F\cap W: F\in \mathcal{F}\}$. If we replace $N_{\Phi}$ by the norm of $L_p$, then we will have the bounded distortion property in $L_p$-spaces.(Definition 2.6.4, \cite{ddm})
 \end{itemize}
\end{defn}
\begin{defn}
A composition dynamical system $(X, \mathcal{F}, \mu, \varphi, C_{\varphi})$ is called
\begin{itemize}
\item dissipative composition dynamical system, generated by $W$, if $(X, \mathcal{F}, \mu, \varphi)$ is a dissipative system generated by $W$;
\item dissipative composition dynamical system, of bounded distortion, generated by $W$, if $(X, \mathcal{F}, \mu, \varphi)$ is a dissipative system of bounded distortion, generated by $W$.
\end{itemize}

\end{defn}
In the following we have a proposition in Orlicz spaces similar to the Proposition 2.6.5 of \cite{ddm}.
\begin{prop}\label{p2.6}
Let $(X, \mathcal{F}, \mu, \varphi)$ be a dissipative system of bounded distortion, generated by $W$. Then the followings hold:
\begin{enumerate}
  \item  There exists $H>0$ such that
\begin{equation}\label{gedis}
\frac{1}{H}\frac{N_{\Phi}(C^{t+s}_{\varphi}(\chi_W))}{N_{\Phi}(C^s_{\varphi}(\chi_W))}\leq \frac{N_{\Phi}(C^{t+s}_{\varphi}(\chi_F))}{N_{\Phi}(C^s_{\varphi}(\chi_F))}\leq H\frac{N_{\Phi}(C^{t+s}_{\varphi}(\chi_W))}{N_{\Phi}(C^s_{\varphi}(\chi_W))},
\end{equation}
for all $F\in \mathcal{F}_W$ with $\mu(F)>0$ and all $s,t\in \mathbb{Z}$.
\item Let $\Phi\in \Delta_2$. If
$$\sup\{\frac{N_{\Phi}(C^{k-1}_{\varphi}(\chi_W))}{{N_{\Phi}(C^{k}_{\varphi}(\chi_W))}}, \frac{N_{\Phi}(C^{k+1}_{\varphi}(\chi_W))}{{N_{\Phi}(C^{k}_{\varphi}(\chi_W))}}: k\in \mathbb{Z}\}$$

is finite and $\varphi$ is bijective, then $\varphi$ and $\varphi^{-1}$ satisfy \ref{ec} condition.
\end{enumerate}
\end{prop}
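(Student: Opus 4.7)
For part (1), the plan is to apply the bounded distortion inequality \eqref{edis} at two different exponents and combine the two by quotient. The right half of \eqref{edis} at $k = t+s$ gives
\begin{equation*}
N_{\Phi}(C^{t+s}_{\varphi}(\chi_F)) \leq K \, \frac{N_{\Phi}(\chi_F)}{N_{\Phi}(\chi_W)} \, N_{\Phi}(C^{t+s}_{\varphi}(\chi_W)),
\end{equation*}
and the left half at $k = s$ gives
\begin{equation*}
N_{\Phi}(C^{s}_{\varphi}(\chi_F)) \geq \frac{1}{K} \, \frac{N_{\Phi}(\chi_F)}{N_{\Phi}(\chi_W)} \, N_{\Phi}(C^{s}_{\varphi}(\chi_W)).
\end{equation*}
Dividing cancels the common factor $N_{\Phi}(\chi_F)/N_{\Phi}(\chi_W)$ and yields the upper bound with $H = K^{2}$. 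The matching lower estimate follows by swapping the roles of the two halves of \eqref{edis}.

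For part (2), I would exploit the dissipative decomposition $X = \dot{\cup}_{k \in \mathbb{Z}} \varphi^{k}(W)$ to reduce condition \eqref{ec} to piecewise estimates along the orbit of $W$. Given $F \in \mathcal{F}$, set $B_{k} := \varphi^{-k}(F) \cap W \in \mathcal{F}_{W}$. Bijectivity of $\varphi$ gives the disjoint decompositions $F = \dot{\cup}_{k} \varphi^{k}(B_{k})$ and $\varphi^{-1}(F) = \dot{\cup}_{k} \varphi^{k-1}(B_{k})$, whence
\begin{equation*}
\mu(F) = \sum_{k} \mu(\varphi^{k}(B_{k})), \qquad \mu(\varphi^{-1}(F)) = \sum_{k} \mu(\varphi^{k-1}(B_{k})).
\end{equation*}
Using the identity $C^{k}_{\varphi}(\chi_{B}) = \chi_{\varphi^{-k}(B)}$, which holds for every $k \in \mathbb{Z}$ by bijectivity, part (1) applied with $s = -k$, $t = 1$, and the set $B_{k}$ gives
\begin{equation*}
\frac{N_{\Phi}(\chi_{\varphi^{k-1}(B_{k})})}{N_{\Phi}(\chi_{\varphi^{k}(B_{k})})} = \frac{N_{\Phi}(C^{1-k}_{\varphi}(\chi_{B_{k}}))}{N_{\Phi}(C^{-k}_{\varphi}(\chi_{B_{k}}))} \leq H \, \frac{N_{\Phi}(C^{1-k}_{\varphi}(\chi_{W}))}{N_{\Phi}(C^{-k}_{\varphi}(\chi_{W}))} \leq HM,
\end{equation*}
where $M$ is the supremum from the hypothesis.

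The main technical step is to convert this uniform norm bound into a uniform measure bound. I would invoke the identity $N_{\Phi}(\chi_{A}) = 1/\Phi^{-1}(1/\mu(A))$ together with the $\Delta_{2}$ condition: iterating $\Phi(2x) \leq K\Phi(x)$ yields $\Phi^{-1}(K^{n} y) \geq 2^{n}\Phi^{-1}(y)$, so any bound $N_{\Phi}(\chi_{A}) \leq M' N_{\Phi}(\chi_{B})$ translates into $\mu(A) \leq K^{\lceil \log_{2} M' \rceil} \mu(B)$. Applied with $A = \varphi^{k-1}(B_{k})$ and $B = \varphi^{k}(B_{k})$, this produces a constant $c_{0}$ with $\mu(\varphi^{k-1}(B_{k})) \leq c_{0}\mu(\varphi^{k}(B_{k}))$ for every $k$; summing over $k$ yields $\mu(\varphi^{-1}(F)) \leq c_{0}\mu(F)$, i.e., \eqref{ec} for $\varphi$. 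The companion estimate $\mu(\varphi(F)) \leq c_{0}'\mu(F)$ needed for $\varphi^{-1}$ is obtained by the symmetric argument with $t = -1$, which calls on the $(k-1)$-ratio in the hypothesis. The subtlety worth flagging is that $\Delta_{2}$ is only assumed for $x \geq x_{0}$, so the iteration $\Phi^{-1}(K^{n} y) \geq 2^{n}\Phi^{-1}(y)$ must be applied where $\Phi^{-1}(y) \geq x_{0}$; this can be arranged using $\mu(W) < \infty$ on the pieces of the decomposition, or alternatively one appeals to the global form of $\Delta_{2}$.
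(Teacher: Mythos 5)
Your proof is correct and takes essentially the same route as the paper: part (1) is the same combination of the two halves of \eqref{edis} at the exponents $t+s$ and $s$ giving $H=K^{2}$, and part (2) uses the same orbit decomposition of an arbitrary measurable set along $\{\varphi^{k}(W)\}_{k}$, the same bound $HM$ on the norm ratio via part (1) and the hypothesis, the same $\Delta_2$ conversion of the norm bound into a measure bound through $N_{\Phi}(\chi_A)=1/\Phi^{-1}(1/\mu(A))$, and the same summation by countable additivity. The only detail the paper treats explicitly that you elide is the case $\mu(B_k)=0$, where part (1) (which needs $\mu(F)>0$) is unavailable but \eqref{edis} forces all iterates of a null set to be null, so the piecewise estimate is trivial; on the other hand you are more careful than the paper about the non-global $\Delta_2$ subtlety.
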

\begin{proof}
It is clear from the condition \ref{edis} that $N_{\Phi}(\chi_F)=0$ if and only if $N_{\Phi}(C^k_{\varphi}(\chi_F))=N_{\Phi}(\chi_{\varphi^{-k}(F)})=0$, for all $k\in \mathbb{Z}$ and therefore, $\mu(F)=0$ if and only if $\mu(\varphi^k(F))=0$, for all $k\in \mathbb{Z}$. This implies that the condition \ref{gedis} is well-defined. By using the condition \ref{edis}, for every $s,t\in \mathcal{Z}$ we have
\begin{align*}
\frac{N_{\Phi}(C^{t+s}_{\varphi}(\chi_W))}{N_{\Phi}(C^s_{\varphi}(\chi_W))}&=\frac{N_{\Phi}(C^{t+s}_{\varphi}(\chi_W))}{N_{\Phi}(\chi_W)}\frac{N_{\Phi}(\chi_W)}{N_{\Phi}(C^s_{\varphi}(\chi_W))}\\
&\leq K^2 \frac{N_{\Phi}(C^{t+s}_{\varphi}(\chi_F))}{N_{\Phi}(\chi_F)}\frac{N_{\Phi}(\chi_F)}{N_{\Phi}(C^s_{\varphi}(\chi_F))}\\
&=K^2\frac{N_{\Phi}(C^{t+s}_{\varphi}(\chi_F))}{N_{\Phi}(C^s_{\varphi}(\chi_F))},
\end{align*}
and by a similar way we have
 $$\frac{N_{\Phi}(C^{t+s}_{\varphi}(\chi_W))}{N_{\Phi}(C^s_{\varphi}(\chi_W))}\geq\frac{1}{K^2}\frac{N_{\Phi}(C^{t+s}_{\varphi}(\chi_F))}{N_{\Phi}(C^s_{\varphi}(\chi_F))}.$$
 If we put $H=K^2$, then the proof is complete.\\
 (2) Let $$M=\sup\{\frac{N_{\Phi}(C^{k-1}_{\varphi}(\chi_W))}{{N_{\Phi}(C^{k}_{\varphi}(\chi_W))}}, \frac{N_{\Phi}(C^{k+1}_{\varphi}(\chi_W))}{{N_{\Phi}(C^{k}_{\varphi}(\chi_W))}}: k\in \mathbb{Z}\}.$$
 For each $B\in\mathcal{F}$, we have $B=\cup^{+\infty}_{k=-\infty} \varphi^k(W)\cap B$. Let $B_k=\varphi^k(W)\cap B$. As $B_k$'s are disjoint and the measure $mu$ is countably additive, we just prove the condition \ref{ec} for $B_k$. If $\mu(B_k)=0$, then $N_{\Phi}(\chi_{B_k})=0$ and if we set $F=\varphi^{-k}(B_k)$, then $C^{-k}_{\varphi}(\chi_{\varphi^{-k}(B_{k})})=\chi_{B_k}$ and so by the condition \ref{edis} as:
 $$
  \frac{1}{K}N_{\Phi}(C^{-k}_{\varphi}(\chi_W))N_{\Phi}(\chi_F)\leq N_{\Phi}(C^{-k}_{\varphi}(\chi_F))N_{\Phi}(\chi_W)\leq KN_{\Phi}(C^{-k}_{\varphi}(\chi_W))N_{\Phi}(\chi_F),$$
  we get that $N_{\Phi}(\chi_{\varphi^{-k}(B_k)})=0$ and so $\chi_{\varphi^{-k}(B_k)}=0$. Thus $\mu(\varphi^{-k}(B_k))=0$ and therefore $\mu(\varphi^{-1}(B_k))=0$. For the case $\mu(B_k)>0$, we apply the condition \ref{gedis} to $F=\varphi{B_k}$ we get that
  $$\frac{N_{\Phi}(C_{\varphi}(\chi_{B_k}))}{N_{\Phi}(\chi_{B_k})}\leq H\frac{N_{\Phi}(C^{-1
  +k}_{\varphi}(\chi_W))}{N_{\Phi}(C^{k}_{\varphi}(\chi_W))}\leq HM.$$
  If we let $c=HM$, then
  $$N_{\Phi}(C_{\varphi}(\chi_{B_k}))\leq cN_{\Phi}(\chi_{B_k})$$
  and so
  $$\Phi^{-1}(\frac{1}{\mu(B_k)})\leq c\Phi^{-1}(\frac{1}{\mu(\varphi^{-1}(B_k))}).$$
  Since $\Phi\in \Delta_2$ and $\Phi$ is increasing, then there exists $n>0$ such that
  $$\frac{1}{\mu(B_k)}=\Phi\Phi^{-1}(\frac{1}{\mu(B_k)})\leq \Phi( c\Phi^{-1}(\frac{1}{\mu(\varphi^{-1}(B_k))}))\leq \frac{cK^n}{2^n}\frac{1}{\mu(\varphi^{-1}(B_k))}.$$
  Now by taking $d=\frac{cK^n}{2^n}$ we have $\mu(\varphi^{-1}(B_k))\leq d\mu(B_k)$. So the condition \ref{ec} holds for $\varphi$. Similarly we have for $\varphi^{-1}$.

\end{proof}
Let $h_k=\frac{d(\mu\circ\varphi^{-k})}{d\mu}$ denotes the Radon-Nikodym derivative of $\mu\circ\varphi^{-k}$ with respect to $\mu$, where
$\mu\circ\varphi^{-k}(F)=\mu(\varphi^{-k}(F))$, for every $F\in \mathcal{F}$.
\begin{prop}
Let $(X, \mathcal{F}, \mu, \varphi)$ be a dissipative system generated by $W$ and $\varphi$ is injective. We put $m_k={ess\,inf}_{x\in W} h_k(x)$, and $M_k={ess\,sup}_{x\in W} h_k(x)$. Without lose of generality we can assume that $m_k\leq 1$ and $M_k\geq 1$. If the sequence $\{\frac{M_k}{m_k}\}_{k\in \mathbb{Z}}$ is bounded, then $\varphi$ is of bounded distortion on $W$.
\end{prop}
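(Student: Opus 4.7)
The goal is to verify the bounded distortion inequality \ref{edis} for every $k\in\mathbb{Z}$ and every $F\in\mathcal{F}_W$. Since $C^k_{\varphi}(\chi_E)=\chi_{\varphi^{-k}(E)}$ and $N_{\Phi}(\chi_E)=1/\Phi^{-1}(1/\mu(E))$ for any set of finite positive measure, \ref{edis} is equivalent to the statement that
\[
\frac{1}{K}\le \frac{\Phi^{-1}(1/\mu(\varphi^{-k}(W)))\,\Phi^{-1}(1/\mu(F))}{\Phi^{-1}(1/\mu(\varphi^{-k}(F)))\,\Phi^{-1}(1/\mu(W))}\le K
\]
holds for some $K>0$ independent of $k$ and $F$. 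The case $\mu(F)=0$ is immediate, since non-singularity of $\varphi$ (and of $\varphi^{-1}$, by injectivity) makes both sides of \ref{edis} vanish.

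The first concrete step is to express the preimage measures via the Radon--Nikodym derivatives: for any measurable $E\subseteq W$ one has $\mu(\varphi^{-k}(E))=\int_E h_k\,d\mu$, and combining this with $m_k\le h_k\le M_k$ on $W$ gives $m_k\mu(E)\le\mu(\varphi^{-k}(E))\le M_k\mu(E)$. Setting $p=1/\mu(W)$ and $q=1/\mu(F)$, this produces $1/\mu(\varphi^{-k}(W))=p/\alpha_k$ and $1/\mu(\varphi^{-k}(F))=q/\beta_k$ for some $\alpha_k,\beta_k\in[m_k,M_k]$.

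The key analytic ingredient I would invoke is the elementary inequality
\[
\min(\lambda,1)\,\Phi^{-1}(s)\le \Phi^{-1}(\lambda s)\le \max(\lambda,1)\,\Phi^{-1}(s),\qquad \lambda,s>0,
\]
valid for any Young's function, which follows from the fact that $t\mapsto \Phi^{-1}(t)/t$ is non-increasing on $(0,\infty)$ (a consequence of convexity and $\Phi(0)=0$). Applying this with $\lambda=1/\alpha_k$ and $\lambda=1/\beta_k$, both lying in $[1/M_k,1/m_k]$, and using the standing assumption $m_k\le 1\le M_k$ to collapse the $\min$ and $\max$ to the endpoints, I get
\[
\tfrac{1}{M_k}\Phi^{-1}(p)\le \Phi^{-1}(p/\alpha_k)\le \tfrac{1}{m_k}\Phi^{-1}(p),\qquad \tfrac{1}{M_k}\Phi^{-1}(q)\le \Phi^{-1}(q/\beta_k)\le \tfrac{1}{m_k}\Phi^{-1}(q).
\]

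Substituting these four estimates into the ratio from the first step, the factors $\Phi^{-1}(p)$ and $\Phi^{-1}(q)$ cancel, and the ratio is pinched between $m_k/M_k$ and $M_k/m_k$. Taking $K=\sup_{k\in\mathbb{Z}} M_k/m_k$, which is finite by hypothesis, completes the argument. The main bookkeeping point will be the careful case analysis in the $\min/\max$ form of the Young inequality to ensure that the normalisations $m_k\le 1\le M_k$ correctly collapse to the two constants $1/m_k$ and $1/M_k$; once that is set up, no additional regularity on $\Phi$ (not even $\Delta_2$) is needed and the rest is cancellation.
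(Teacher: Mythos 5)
Your argument is correct, and it reaches the same two-sided estimate as the paper by a somewhat different route. The paper works directly with the modular: it shows $N_{\Phi}(C^k_{\varphi}(\chi_F))\leq M_k N_{\Phi}(\chi_F)$ and $m_k N_{\Phi}(\chi_F)\leq N_{\Phi}(C^k_{\varphi}(\chi_F))$ by estimating $\int_X\Phi\bigl(\cdot\bigr)\,d\mu$ after the change of variables $\int_X\Phi(g\circ\varphi^k)\,d\mu=\int_X\Phi(g)h_k\,d\mu$, using only convexity of $\Phi$ (namely $\Phi(\lambda x)\leq\lambda\Phi(x)$ for $0\leq\lambda\leq1$) together with $m_k\leq h_k\leq M_k$ on $W$, and then divides the two estimates for $F$ and for $W$ to get the distortion constant $K=\sup_k M_k/m_k$. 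You instead reduce everything to the closed-form $N_{\Phi}(\chi_E)=1/\Phi^{-1}(1/\mu(E))$, pass from $m_k\mu(E)\leq\mu(\varphi^{-k}(E))\leq M_k\mu(E)$ to the norm level via the quasi-concavity inequality $\min(\lambda,1)\Phi^{-1}(s)\leq\Phi^{-1}(\lambda s)\leq\max(\lambda,1)\Phi^{-1}(s)$, and cancel. The content is the same pinching of $N_{\Phi}(\chi_{\varphi^{-k}(E)})$ between $m_k N_{\Phi}(\chi_E)$ and $M_k N_{\Phi}(\chi_E)$, but your version needs the (standard, and correctly justified) monotonicity of $t\mapsto\Phi^{-1}(t)/t$ as an extra lemma and is confined to characteristic functions, while the paper's modular computation uses only the Luxemburg-norm definition and convexity; on the other hand your route makes the role of the formula $N_{\Phi}(\chi_E)=1/\Phi^{-1}(1/\mu(E))$ explicit and handles the degenerate case $\mu(F)=0$ cleanly. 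Two small points worth making explicit in a final write-up: the normalization $m_k\leq1\leq M_k$ together with the bound on $M_k/m_k$ is what guarantees $0<m_k$ and $M_k<\infty$, hence that $\mu(\varphi^{-k}(F))\in(0,\infty)$ and the norm formula applies; and neither your argument nor the paper's uses any $\Delta_2$-type assumption, as you note.
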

\begin{proof}
Suppose that $\{\frac{M_k}{m_k}\}_{k\in \mathbb{Z}}$ is bounded, so there exists $K>0$ such that $\frac{M_k}{m_k}\leq K$. Let $F\in \mathcal{F}$ with $\mu(F)>0$. Then for every $k\in \mathbb{Z}$,
\begin{align*}
\int_X\Phi(\frac{C^k_{\varphi}(\chi_{F})}{M_kN_{\Phi}(\chi_F)}d\mu&\leq \int_X\Phi(\frac{\chi_{F}}{M_kN_{\Phi}(\chi_F)}h_kd\mu\\
&\leq \int_X\Phi(\frac{\chi_{F}}{M_kN_{\Phi}(\chi_F)}M_kd\mu\\
&\int_X\Phi(\frac{\chi_{F}}{N_{\Phi}(\chi_F)}d\mu\leq 1.
\end{align*}
This implies that $$N_{\Phi}(C^k_{\varphi}(\chi_{F}))\leq M_kN_{\Phi}(\chi_F).$$
Also,
\begin{align*}
\int_X\Phi(\frac{m_k\chi_{F}}{N_{\Phi}(C^k_{\varphi}(\chi_F))}d\mu&\leq \int_X\Phi(\frac{m_k\chi_{F}}{N_{\Phi}(C^k_{\varphi}(\chi_F))}\frac{h_k}{m_k}d\mu\\
&\int_X\Phi(\frac{m_kC^k_{\varphi}(\chi_{F})}{m_kN_{\Phi}(C^k_{\varphi}(\chi_F))}d\mu\leq 1.
\end{align*}
This implies that
 $$m_kN_{\Phi}(\chi_W)=N_{\Phi}(m_k\chi_W)\leq N_{\Phi}(C^k_{\varphi}(\chi_W)).$$
 By the above observations we have
 $$\frac{1}{K}\frac{N_{\Phi}(C^{k}_{\varphi}(\chi_W))}{N_{\Phi}(\chi_W)}\leq\frac{N_{\Phi}(C^{k}_{\varphi}(\chi_F))}{N_{\Phi}(\chi_F)}\leq K\frac{N_{\Phi}(C^{k}_{\varphi}(\chi_W))}{N_{\Phi}(\chi_W)}.$$
 So we get the result.
\end{proof}

\begin{defn} Let $\varphi,\varphi^{-1}$ satisfy the condition \ref{ec} and $(X,\mathcal{F},\mu,\varphi)$ be a dissipative system generated by $W$. We define the following conditions
\begin{equation}\label{hc}
 \overline{\lim}_{n\rightarrow\infty}\sup_{k\in\mathbb{Z}}\left(\frac{\Phi^{-1}(\frac{1}{\mu(\varphi^{k+n}(W))})}{\Phi^{-1}(\frac{1}{\mu(\varphi^{k}(W))})}\right)^{\frac{1}{n}}<1 \end{equation}
  \begin{equation}\label{hd} \underline{\lim}_{n\rightarrow\infty}\inf_{k\in\mathbb{Z}}\left(\frac{\Phi^{-1}(\frac{1}{\mu(\varphi^{k+n}(W))})}{\Phi^{-1}(\frac{1}{\mu(\varphi^{k}(W))})}\right)^{\frac{1}{n}}>1 \end{equation}
\begin{equation}\label{gh}
\overline{\lim}_{n\rightarrow\infty}\sup_{k\in-\mathbb{N}_0}\left(\frac{\Phi^{-1}(\frac{1}{\mu(\varphi^{k}(W))})}{\Phi^{-1}(\frac{1}{\mu(\varphi^{k-n}(W))})}\right)^{\frac{1}{n}}<1
 \ \ \text{and} \ \ \underline{\lim}_{n\rightarrow\infty}\inf_{k\in\mathbb{N}_0}\left(\frac{\Phi^{-1}(\frac{1}{\mu(\varphi^{k}(W))})}{\Phi^{-1}(\frac{1}{\mu(\varphi^{k+n}(W))})}\right)^{\frac{1}{n}}>1 \end{equation}
\end{defn}
From now on we assume that $(X,\mathcal{F},\mu, \varphi)$ is a dissipative system generated by $W$ such that the associated composition operator $C_{\varphi}$ is an invertible operator on Orlicz space $L^{\Phi}(\mu)$. Now we rewrite some notations and terminologies for Orlicz spaces similar to those one that were introduced for $L^p(\mu)$-spaces in \cite{ddm}.
\begin{defn}
Let $f\in L^{\Phi}(\mu)$. Then we define $f=f_+ +f_-$, where
$$
f_+(x)=\left\{
  \begin{array}{ll}
    0, & \hbox{if $x\in\cup^{\infty}_{k=0}\varphi^k(W)$} \\
    f(x), & \hbox{otherwise,} \\
     \end{array}
\right.
$$
and,
$$\ \ \ \ f_-(x)=\left\{
  \begin{array}{ll}
    0, & \hbox{if $x\in\cup^{\infty}_{k=1}\varphi^{-k}(W)$} \\
    f(x), & \hbox{otherwise.} \\
     \end{array}
\right.$$
 Let $L^{\Phi}_+=\{f_+:f\in L^{\Phi}(\mu)\}$ and $L^{\Phi}_-=\{f_-:f\in L^{\Phi}(\mu)\}$. It is clear that $L^{\Phi}(\mu)=L^{\Phi}_+\oplus L^{\Phi}_-$ and $C_{\varphi}(L^{\Phi}_+)\subseteq L^{\Phi}_+$ and $C^{-1}_{\varphi}(L^{\Phi}_-)\subseteq L^{\Phi}_-$.
\end{defn}
\begin{defn}
Let $K,t>0$ and let $\mathcal{U}^{\Phi}_C(K,t)$ and $\mathcal{U}^{\Phi}_D(K,t)$ be the set of all $f\in L^{\Phi}(\mu)$ which satisfy the following conditions, respectively:
\begin{equation}\label{u_c}
\sup_{k\in \mathbb{Z}}\frac{N_{\Phi}(C^k_{\varphi^{-1}}f)}{N_{\Phi}(C^{k+n}_{\varphi^{-1}}(f))}\leq Kt^n, \ \ \ \ \ \forall n\in \mathbb{N}
\end{equation}
and
\begin{equation}\label{u_d}
\inf_{k\in \mathbb{Z}}\frac{N_{\Phi}(C^k_{\varphi^{-1}}f)}{N_{\Phi}(C^{k+n}_{\varphi^{-1}}(f))}\geq Kt^n, \ \ \ \ \ \forall n\in \mathbb{N}.
\end{equation}
Also, we let $\mathcal{U}^{\Phi}_{GH+}(K,t)$ and $\mathcal{U}^{\Phi}_{GH-}(K,t)$ the set of all $f$ in $L^{\Phi}_+$ and $L^{\Phi}_-$, respectively, which satisfy the following conditions:
\begin{equation}\label{ugh+}
\sup_{k\in -\mathbb{N}_0}\frac{N_{\Phi}(C^{k-n}_{\varphi^{-1}}(f))}{N_{\Phi}(C^{k}_{\varphi^{-1}}(f))}\leq Kt^n, \ \ \ \ \ \forall n\in \mathbb{N}
\end{equation}
and
\begin{equation}\label{ugh-}
\inf_{k\in \mathbb{N}_0}\frac{N_{\Phi}(C^k_{\varphi^{-1}}(f))}{N_{\Phi}(C^{k+n}_{\varphi^{-1}}(f))}\geq K\frac{1}{t^n}, \ \ \ \ \ \forall n\in \mathbb{N}.
\end{equation}
\end{defn}
Here we recall a simple fact about the definition of upper and lower limits.
\begin{prop}\label{p2.9}
Let $\{a_n\}_{n\in \mathbb{N}}$ be a sequence of non-negative real numbers and $t>0$. Then the following hold:
\begin{enumerate}
  \item If $\overline{\lim}_{n\rightarrow \infty}a^{\frac{1}{n}}_n<t$, then there exists $K>0$ such that $a_n\leq Kt^n$, for every $n\in \mathbb{N}$.
  \item If $\underline{\lim}_{n\rightarrow \infty}a^{\frac{1}{n}}_n>t$, then there exists $K>0$ such that $a_n\geq Kt^n$, for every $n\in \mathbb{N}$.
\end{enumerate}
\end{prop}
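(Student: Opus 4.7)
The plan is to invoke the standard $\varepsilon$-characterization of $\limsup$ and $\liminf$ to obtain geometric-growth control for sufficiently large $n$, and then to enlarge (respectively shrink) the constant so as to absorb the finitely many initial terms.

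For part (1), I would pick an auxiliary value $s$ with $\overline{\lim}_{n\to\infty}a_n^{1/n}<s<t$. By the very definition of $\limsup$, there exists $N\in\mathbb{N}$ such that $a_n^{1/n}\le s$ whenever $n\ge N$, and therefore $a_n\le s^n\le t^n$ for all such $n$. The remaining indices $1\le n<N$ form a finite set on which the ratios $a_n/t^n$ are bounded above, so taking $K=\max\{1,\max_{1\le n<N} a_n/t^n\}$ yields $a_n\le Kt^n$ for every $n\in\mathbb{N}$.

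For part (2), the argument is symmetric: I would choose $s$ with $t<s<\underline{\lim}_{n\to\infty}a_n^{1/n}$, extract an index $N$ such that $a_n^{1/n}\ge s$ for $n\ge N$, so that $a_n\ge s^n\ge t^n$. For the finitely many indices $n<N$ I would take $K=\min\{1,\min_{1\le n<N} a_n/t^n\}$, which is strictly positive provided the sequence avoids the value $0$ (the tacit hypothesis required for the existence of a strictly positive $K$, and the only case of interest in the subsequent applications of the proposition, where $a_n$ is a quotient of norms of nonzero characteristic functions). This gives $a_n\ge Kt^n$ throughout.

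The argument has no serious obstacle: the core inequalities $a_n\le s^n$ and $a_n\ge s^n$ are immediate from the $\limsup$/$\liminf$ definitions, and the only mildly delicate point is the bookkeeping needed to ensure that one single constant $K$ works for all $n\in\mathbb{N}$ rather than merely for all sufficiently large~$n$.
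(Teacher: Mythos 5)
Your proof is correct and is exactly the standard argument: the paper states Proposition \ref{p2.9} as a recalled fact without giving a proof, and your $\varepsilon$-characterization of $\overline{\lim}$/$\underline{\lim}$ plus absorption of the finitely many initial indices into the constant $K$ is the intended justification. Your side remark on part (2) is also apt: as literally stated for merely non-negative sequences the conclusion can fail if some $a_n=0$ (a single zero term does not affect $\underline{\lim} a_n^{1/n}$ but rules out any $K>0$), so the tacit positivity of the $a_n$, which holds in all the paper's applications where $a_n$ is a ratio of norms of characteristic functions of sets of positive finite measure, is indeed needed.
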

\begin{prop}\label{pp2.10}
The following hold.
\begin{enumerate}
  \item The condition \ref{hc} holds if and only if $\chi_{w}\in \mathcal{U}^{\Phi}_{C}(K,t)$ for some $K>0$ and $t<1$.
  \item The condition \ref{hd} holds if and only if $\chi_{w}\in \mathcal{U}^{\Phi}_{D}(K,t)$ for some $K>0$ and $t>1$.
  \item The condition \ref{gh} holds if and only if there exist $K>0$ and $t<1$ such that $\chi_{w}\in \mathcal{U}^{\Phi}_{GH-}(K,t)$ and $C_{\varphi}(\chi_{w})\in \mathcal{U}^{\Phi}_{GH+}(K,t)$.
\end{enumerate}
\end{prop}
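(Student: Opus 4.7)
The plan is to reduce the entire proposition to arithmetic statements about the sequence $A_k := \Phi^{-1}(1/\mu(\varphi^k(W)))$ via the identity
\[
N_{\Phi}(C^k_{\varphi^{-1}}(\chi_W)) = N_{\Phi}(\chi_{\varphi^k(W)}) = \frac{1}{A_k}, \qquad k \in \mathbb{Z}.
\]
Under this substitution, every $N_{\Phi}$-quotient appearing in the membership conditions for $\mathcal{U}^{\Phi}_C, \mathcal{U}^{\Phi}_D, \mathcal{U}^{\Phi}_{GH\pm}$ applied to $\chi_W$ collapses to a ratio of the form $A_{k+n}/A_k$; and the analogous computation $C^k_{\varphi^{-1}}(C_{\varphi}\chi_W) = \chi_{\varphi^{k-1}(W)}$ handles $C_{\varphi}\chi_W$ with indices shifted by one. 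Each of (\ref{hc}), (\ref{hd}), (\ref{gh}) is, by definition, an $\overline{\lim}$ or $\underline{\lim}$ of the $n$-th root of the same kind of ratio, so Proposition \ref{p2.9} is precisely the bridge between the two formulations.

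For part (1), take $a_n = \sup_k (A_{k+n}/A_k)$: the condition $\chi_W \in \mathcal{U}^{\Phi}_C(K,t)$ with $t<1$ is exactly $a_n \leq Kt^n$, while (\ref{hc}) is exactly $\overline{\lim}_n a_n^{1/n} < 1$. Both implications follow directly from Proposition \ref{p2.9}(1). Part (2) is parallel, using the infimum form and Proposition \ref{p2.9}(2); no index manipulation is required in either case.

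For part (3), the plan is to treat the two halves of (\ref{gh}) independently. The half with $k \in \mathbb{N}_0$ maps directly onto the membership $\chi_W \in \mathcal{U}^{\Phi}_{GH-}$ by the same substitution together with Proposition \ref{p2.9}. For the other half, with $k \in -\mathbb{N}_0$, I would use the identity $C^k_{\varphi^{-1}}(C_{\varphi}\chi_W) = \chi_{\varphi^{k-1}(W)}$: after the reindexing $j = k-1$, the supremum defining $\mathcal{U}^{\Phi}_{GH+}$ applied to $C_{\varphi}\chi_W$ becomes essentially the supremum appearing in (\ref{gh}), modulo a single boundary index whose contribution is controlled by the other half of (\ref{gh}) and is absorbed in the $\limsup$.

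The main obstacle will be the bookkeeping in part (3): one must align each half of (\ref{gh}) with the correct choice of function ($\chi_W$ or $C_{\varphi}\chi_W$), the correct set ($\mathcal{U}^{\Phi}_{GH+}$ or $\mathcal{U}^{\Phi}_{GH-}$), the correct direction of the inequality, and then verify that the single-index shift induced by $C_{\varphi}$ leaves the $n$-th-root asymptotics intact. Once this translation dictionary is fixed, the equivalences are immediate consequences of Proposition \ref{p2.9}.
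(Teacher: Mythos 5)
Your proposal follows essentially the same route as the paper: rewrite every norm via $N_{\Phi}(C^{k}_{\varphi^{-1}}(\chi_W))=1/\Phi^{-1}\!\left(\tfrac{1}{\mu(\varphi^{k}(W))}\right)$, invoke Proposition \ref{p2.9} to pass between the root-test formulations and the $Kt^{n}$ bounds, and handle part (3) through the index shift $C_{\varphi}(\chi_W)=\chi_{\varphi^{-1}(W)}$ (the paper implements this by replacing $W$ with $\varphi^{-1}(W)$ in the first half of condition \ref{gh}). The only slip is cosmetic: in the converse of (3) the extra boundary index $k=0$ is controlled by the neighboring index of the same half of the condition together with the fixed ratio $N_{\Phi}(\chi_{\varphi^{-1}(W)})/N_{\Phi}(\chi_W)$, rather than by the other half of \ref{gh}, and as you say this constant factor disappears under the $n$-th root.
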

\begin{proof}
(1) Suppose that the condition \ref{hc} holds. Let $a_n=\sup_{k\in \mathbb{Z}}\frac{C^{k}_{\varphi^{-1}}(\chi_W)}{C^{k+n}_{\varphi^{-1}}(\chi_W)}$. Since $N_{\Phi}(C^{k}_{\varphi^{-1}}(\chi_W))=\frac{1}{\Phi^{-1}(\frac{1}{\mu(\varphi^k(W))})}$, then by definition of the condition \ref{hc} and the Proposition \ref{p2.9} we get that there exist $K>0$ and $t<1$ such that $\chi_w\in \mathcal{U}^{\Phi}_{C}(K,t)$.\\
For the converse,  Let $K>0$ and $t<1$ such that $\chi_w\in \mathcal{U}^{\Phi}_{C}(K,t)$, i.e.,
$$\sup_{k\in \mathbb{Z}}\frac{\Phi^{-1}(\frac{1}{\mu(\varphi^{k+n}(W))})}{\Phi^{-1}(\frac{1}{\mu(\varphi^k(W))})}=\sup_{k\in \mathbb{Z}}\frac{N_{\Phi}(C^k_{\varphi^{-1}}(\chi_W))}{N_{\Phi}(C^{k+n}_{\varphi^{-1}}(\chi_W))}\leq Kt^n.$$
Hence
\begin{align*}
\overline{\lim}_{n\rightarrow \infty}\sup_{k\in \mathbb{Z}}\left(\frac{\Phi^{-1}(\frac{1}{\mu(\varphi^{k+n}(W))})}{\Phi^{-1}(\frac{1}{\mu(\varphi^k(W))})}\right)^{\frac{1}{n}}&=\overline{\lim}_{n\rightarrow \infty}\sup_{k\in \mathbb{Z}}\left(\frac{N_{\Phi}(C^k_{\varphi^{-1}}(\chi_W))}{N_{\Phi}(C^{k+n}_{\varphi^{-1}}(\chi_W))}\right)^{\frac{1}{n}}\\
&\leq\overline{\lim}_{n\rightarrow \infty} K^{\frac{1}{n}}t\\
&=t<1.
\end{align*}
So the condition \ref{hc} holds.\\
(2) By using the definition of the condition \ref{hd} and Proposition \ref{p2.9}, similar to the proof of (1) we get the proof.\\
(3) Suppose that the condition \ref{gh} holds. We replace $W$ by $\varphi^{-1}(W)$ in the first part of condition \ref{gh}. Then by Proposition \ref{p2.9} there exist $K>0$ and $0<t<1$ such that
$$
\sup_{k\in-\mathbb{N}_0}\left(\frac{\Phi^{-1}(\frac{1}{\mu(\varphi^{k}(\varphi^{-1}(W)))})}{\Phi^{-1}(\frac{1}{\mu(\varphi^{k-n}(\varphi^{-1}(W)))})}\right)\leq Kt^n$$
and
$$\inf_{k\in\mathbb{N}_0}\left(\frac{\Phi^{-1}(\frac{1}{\mu(\varphi^{k}(W))})}{\Phi^{-1}(\frac{1}{\mu(\varphi^{k+n}(W))})}\right)\geq K\frac{1}{t^n},$$
 for every $n\in \mathbb{N}$.
Hence
$$
\sup_{k\in -\mathbb{N}_0}\frac{N_{\Phi}(C^{k-n}_{\varphi^{-1}}(\chi_{\varphi^{-1}(W)}))}{N_{\Phi}(C^{k}_{\varphi^{-1}}(\chi_{\varphi^{-1}(W)}))}=\sup_{k\in-\mathbb{N}_0}\left(\frac{\Phi^{-1}(\frac{1}{\mu(\varphi^{k}(\varphi^{-1}(W)))})}{\Phi^{-1}(\frac{1}{\mu(\varphi^{k-n}(\varphi^{-1}(W)))})}\right)\leq Kt^n,$$
and
$$
\inf_{k\in \mathbb{N}_0}\frac{N_{\Phi}(C^k_{\varphi^{-1}}(\chi_W))}{N_{\Phi}(C^{k+n}_{\varphi^{-1}}(\chi_W))}=\inf_{k\in\mathbb{N}_0}\left(\frac{\Phi^{-1}(\frac{1}{\mu(\varphi^{k}(W))})}{\Phi^{-1}(\frac{1}{\mu(\varphi^{k+n}(W))})}\right)\geq
K\frac{1}{t^n}$$
So we get that $\chi_W\in \mathcal{U}^{\Phi}_{GH-}(K,t)$ and $C_{\varphi}(\chi_W)\in \mathcal{U}^{\Phi}_{GH+}(K,t)$.\\
For the converse, similar to proof of (1), we can get the proof.

\end{proof}
\begin{prop}\label{pp2.11}
The following statements are true.
\begin{itemize}
  \item Let $\mathcal{U}^{\Phi}(K,t)\in \{\mathcal{U}^{\Phi}_C(K,t), \mathcal{U}^{\Phi}_D(K,t)\}$. If $f\in \mathcal{U}^{\Phi}(K,t)$, then $C^j_{\varphi}(f)\in \mathcal{U}^{\Phi}(K,t)$, for all $j\in \mathbb{Z}$.
  \item If $f\in \mathcal{U}^{\Phi}_{GH+}(K,t)$, then $C^j_{\varphi}(f)\in \mathcal{U}^{\Phi}_{GH+}(K,t)$, for all $j\geq0$.
  \item If $f\in\mathcal{U}^{\Phi}_{GH-}(K,t)$, then $C^j_{\varphi}(f)\in \mathcal{U}^{\Phi}_{GH-}(K,t)$, for all $j\leq0$.
\end{itemize}
\end{prop}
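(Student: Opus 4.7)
The plan is to reduce all three parts to a single observation: since $C_{\varphi}$ is invertible on $L^{\Phi}(\mu)$ with inverse $C_{\varphi^{-1}}$, the composition operator powers satisfy $C^k_{\varphi^{-1}}\circ C^j_{\varphi}=C^{k-j}_{\varphi^{-1}}$ for all $j,k\in\mathbb{Z}$. Substituting $C^j_{\varphi}(f)$ into any of the defining ratios therefore yields
\[
\frac{N_{\Phi}(C^k_{\varphi^{-1}}(C^j_{\varphi}(f)))}{N_{\Phi}(C^{k+n}_{\varphi^{-1}}(C^j_{\varphi}(f)))}=\frac{N_{\Phi}(C^{k-j}_{\varphi^{-1}}(f))}{N_{\Phi}(C^{k-j+n}_{\varphi^{-1}}(f))},
\]
so acting by $C^j_{\varphi}$ only translates the index $k\mapsto k-j$ while the exponent gap $n$ is untouched. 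The whole proof is then bookkeeping of which index set is effected by this translation.

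For (1), both $\mathcal{U}^{\Phi}_C(K,t)$ and $\mathcal{U}^{\Phi}_D(K,t)$ use a supremum (respectively infimum) over $k\in\mathbb{Z}$, which is translation invariant. After reindexing $k'=k-j$, the sup or inf is exactly the one appearing in the definition applied to $f$, so the bound $\leq Kt^n$ (resp.\ $\geq Kt^n$) transfers verbatim to $C^j_{\varphi}(f)$.

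For (2), suppose $f\in\mathcal{U}^{\Phi}_{GH+}(K,t)$ and $j\geq 0$. First I note $C^j_{\varphi}(f)\in L^{\Phi}_+$, since $C_{\varphi}(L^{\Phi}_+)\subseteq L^{\Phi}_+$ and we iterate $j$ times. When $k$ ranges over $-\mathbb{N}_0=\{0,-1,-2,\dots\}$ and $j\geq 0$, the shifted index $k'=k-j$ ranges over $\{-j,-j-1,\dots\}\subseteq -\mathbb{N}_0$. Hence
\[
\sup_{k\in-\mathbb{N}_0}\frac{N_{\Phi}(C^{k-n}_{\varphi^{-1}}(C^j_{\varphi}(f)))}{N_{\Phi}(C^{k}_{\varphi^{-1}}(C^j_{\varphi}(f)))}=\sup_{k'\in\{-j,-j-1,\dots\}}\frac{N_{\Phi}(C^{k'-n}_{\varphi^{-1}}(f))}{N_{\Phi}(C^{k'}_{\varphi^{-1}}(f))}\leq\sup_{k'\in-\mathbb{N}_0}\frac{N_{\Phi}(C^{k'-n}_{\varphi^{-1}}(f))}{N_{\Phi}(C^{k'}_{\varphi^{-1}}(f))}\leq Kt^n,
\]
where the first inequality is monotonicity of sup over a smaller set and the second is the hypothesis on $f$. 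This is exactly the $\mathcal{U}^{\Phi}_{GH+}(K,t)$ condition for $C^j_{\varphi}(f)$.

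Part (3) is dual. If $f\in\mathcal{U}^{\Phi}_{GH-}(K,t)$ and $j\leq 0$, then $C^j_{\varphi}(f)=C^{-j}_{\varphi^{-1}}(f)\in L^{\Phi}_-$, using $C^{-1}_{\varphi}(L^{\Phi}_-)\subseteq L^{\Phi}_-$ iterated $|j|$ times. For $k\in\mathbb{N}_0$ and $j\leq 0$, the index $k'=k-j\geq k\geq 0$ lies in $\{-j,-j+1,\dots\}\subseteq\mathbb{N}_0$, so the infimum over this subset is bounded below by the infimum over all of $\mathbb{N}_0$, which by hypothesis is at least $K/t^n$; this gives exactly the $\mathcal{U}^{\Phi}_{GH-}(K,t)$ condition for $C^j_{\varphi}(f)$. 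There is no real obstacle here — the main thing to keep straight is that the sign constraints on $j$ in (2) and (3) are precisely what is needed so that the translated index set remains inside $-\mathbb{N}_0$ or $\mathbb{N}_0$ respectively, and so that the invariance $C_{\varphi}(L^{\Phi}_{\pm})\subseteq L^{\Phi}_{\pm}$ (in the appropriate direction) applies.
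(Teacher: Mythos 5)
Your proof is correct and is exactly the ``straightforward calculation from the definitions'' that the paper's proof merely asserts: the identity $C^k_{\varphi^{-1}}C^j_{\varphi}=C^{k-j}_{\varphi^{-1}}$, the translation of the index set, and the invariance $C_{\varphi}(L^{\Phi}_{+})\subseteq L^{\Phi}_{+}$, $C^{-1}_{\varphi}(L^{\Phi}_{-})\subseteq L^{\Phi}_{-}$ are precisely what is needed, and your sign bookkeeping for $j$ in parts (2) and (3) is right. No issues.
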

\begin{proof}
By a straightforward calculations we can get all statements by definitions.
\end{proof}
\begin{prop}\label{p2.11}
Let $\mathcal{U}^{\Phi}(K,t)\in \{\mathcal{U}^{\Phi}_{C}(K,t),\mathcal{U}^{\Phi}_{D}(K,t),\mathcal{U}^{\Phi}_{GH+}(K,t),\mathcal{U}^{\Phi}_{GH-}(K,t)\}$. Then the following are true.
\begin{enumerate}
  \item If $f\in \mathcal{U}^{\Phi}(K,t)$ and $\alpha\in\mathbb{C}\setminus\{0\}$, then $\alpha.f\in \mathcal{U}^{\Phi}(K,t)$.
  \item If $f_1, f_2\in \mathcal{U}^{\Phi}(K,t)$ with disjoint supports, then $f_1+f_2\in \mathcal{U}^{\Phi}(K,t)$
\end{enumerate}
\end{prop}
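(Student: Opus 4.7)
The proof in both parts is a direct verification of the defining inequalities and rests on two elementary properties of the Luxemburg norm: absolute homogeneity $N_{\Phi}(\alpha g)=|\alpha|N_{\Phi}(g)$, and the sandwich
$$\max\bigl(N_{\Phi}(u_1),N_{\Phi}(u_2)\bigr)\leq N_{\Phi}(u_1+u_2)\leq N_{\Phi}(u_1)+N_{\Phi}(u_2)$$
valid whenever $u_1,u_2$ have disjoint supports. The upper bound is the triangle inequality; the lower bound follows from additivity of the modular $\rho_{\Phi}$ on disjoint supports, since $\rho_{\Phi}((u_1+u_2)/k)\leq 1$ forces each $\rho_{\Phi}(u_j/k)\leq 1$, hence $N_{\Phi}(u_j)\leq N_{\Phi}(u_1+u_2)$.

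For (1), the operator $C^k_{\varphi^{-1}}$ is linear and $N_{\Phi}$ is absolutely homogeneous, so every quantity $N_{\Phi}(C^k_{\varphi^{-1}}(\alpha f))$ equals $|\alpha|\,N_{\Phi}(C^k_{\varphi^{-1}}(f))$. The factor $|\alpha|$ cancels in the numerator and denominator of each of the four ratios \eqref{u_c}, \eqref{u_d}, \eqref{ugh+}, \eqref{ugh-}, and the sup/inf over $k$ is unaffected, so $\alpha f$ lies in the same class with the same constants $K,t$.

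For (2), I first observe that because $\varphi$ is bijective, $S(f_1)\cap S(f_2)=\emptyset$ forces the supports of $C^k_{\varphi^{-1}}(f_j)=f_j\circ\varphi^{-k}$, namely $\varphi^k(S(f_j))$, to remain disjoint for every $k\in\mathbb{Z}$. Applying the sandwich with $u_j=C^k_{\varphi^{-1}}(f_j)$ in the numerator and $u_j=C^{k+n}_{\varphi^{-1}}(f_j)$ in the denominator of \eqref{u_c} yields
$$\frac{N_{\Phi}(C^k_{\varphi^{-1}}(f_1+f_2))}{N_{\Phi}(C^{k+n}_{\varphi^{-1}}(f_1+f_2))}\leq\frac{N_{\Phi}(C^k_{\varphi^{-1}}(f_1))+N_{\Phi}(C^k_{\varphi^{-1}}(f_2))}{\max_{j}N_{\Phi}(C^{k+n}_{\varphi^{-1}}(f_j))}\leq 2Kt^n,$$
uniformly in $k$. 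A symmetric argument, with max in the numerator and sum in the denominator, handles the lower bound required by $\mathcal{U}^{\Phi}_D(K,t)$, and the $GH\pm$ cases are treated identically since the restriction $k\in\pm\mathbb{N}_0$ plays no role in the pointwise-in-$k$ disjointness argument.

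The only mild obstacle is that $N_{\Phi}$ is not exactly additive on disjoint supports; the two-sided sandwich costs a constant factor of $2$ in the upper bound (or $1/2$ in the lower-bound cases), which is absorbed by rescaling $K$. The geometric rate $t^n$, which is the qualitatively important content of membership in these classes, is preserved throughout.
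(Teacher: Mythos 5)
Your proof is correct and follows essentially the same route as the paper's: homogeneity of $N_{\Phi}$ for (1), and for (2) the triangle inequality plus the disjoint-support lower bound $N_{\Phi}(u_j)\leq N_{\Phi}(u_1+u_2)$ applied to $C^{k}_{\varphi^{-1}}(f_j)$ and $C^{k+n}_{\varphi^{-1}}(f_j)$, giving the bound $2Kt^n$ (resp.\ $\tfrac{1}{2}Kt^n$ in the infimum cases). You are in fact slightly more careful than the paper, which at one step writes exact additivity of $N_{\Phi}$ over disjoint supports and then keeps the constant $K$ in the conclusion; your observation that the factor $2$ is harmless and absorbed by rescaling $K$ is the correct reading.
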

\begin{proof}
(1) It is clear by definition.\\
(2) Let $\mathcal{U}^{\Phi}(K,t)=\mathcal{U}^{\Phi}_{C}(K,t)$ and $f_1, f_2\in \mathcal{U}^{\Phi}_C(K,t)$ be with disjoint supports. So by definition we have
$$\sup_{k\in \mathbb{Z}}\frac{N_{\Phi}(C^k_{\varphi^{-1}}(f_i))}{N_{\Phi}(C^{k+n}_{\varphi^{-1}}(f_i))}\leq Kt^n, \ \ \ \ \ \forall n\in \mathbb{N},$$
for $i=1,2$. Hence
$$N_{\Phi}(C^k_{\varphi^{-1}}(f_i))\leq Kt^n N_{\Phi}(C^{k+n}_{\varphi^{-1}}(f_i)),$$
for every $k\in \mathbb{Z}$ and $i=1,2$. Since $f_1$ and $f_2$ have disjoint supports, then by definition of $N_{\Phi}(.)$ it is clear that $N_{\Phi}(f_1)+N_{\Phi}(f_2)\leq2N_{\Phi}(f_1+f_2)$. Moreover, Since $f_1$ and $f_2$ have disjoint supports, then for each $k\in \mathbb{Z}$, $f_1\circ\varphi^{-k}$ and $f_2\circ \varphi^{-k}$ have disjoint support too. By these observations for every $k\in \mathbb{Z}$ we have
\begin{align*}
N_{\Phi}(C^{k}_{\varphi^{-1}}(f_1+f_2))&=N_{\Phi}((f_1+f_2)\circ\varphi^{-k})\\
&=N_{\Phi}((f_1)\circ\varphi^{-k})+N_{\Phi}((f_2)\circ\varphi^{-k})\\
&\leq N_{\Phi}(C^k_{\varphi^{-1}}(f_1))+N_{\Phi}(C^k_{\varphi^{-1}}(f_2))\\
&\leq Kt^n (N_{\Phi}(C^{k+n}_{\varphi^{-1}}(f_1))+ N_{\Phi}(C^{k+n}_{\varphi^{-1}}(f_2)))\\
&\leq 2Kt^nN_{\Phi}(C^{k+n}_{\varphi^{-1}}(f_1+f_2)).
\end{align*}
Therefore
$$\sup_{k\in \mathbb{Z}}\frac{N_{\Phi}(C^k_{\varphi^{-1}}(f_1+f_2))}{N_{\Phi}(C^{k+n}_{\varphi^{-1}}(f_1+f_2))}\leq 2Kt^n, \ \ \ \ \ \forall n\in \mathbb{N}.$$
This implies that $f_1+f_2\in\mathcal{U}^{\Phi}_C(K,t)$. For other cases the proof is similar to the case $\mathcal{U}^{\Phi}_{C}(K,t)$.
\end{proof}

\begin{prop}\label{p2.13} Let $(X, \mathcal{F}, \mu, \varphi)$ be with bounded distortion and $H$ be the bounded distortion constant from Proposition \ref{p2.6}. Then for each $j\in \mathbb{Z}$ the following statements hold.
\begin{enumerate}
  \item Let $\mathcal{U}^{\Phi}(K,t)\in \{\mathcal{U}^{\Phi}_C(K,t), \mathcal{U}^{\Phi}_D(K,t)\}$. If $C^{-j}_{\varphi}(\chi_W)\in \mathcal{U}^{\Phi}(K,t)$, then $C^{-j}_{\varphi}(\chi_F)\in \mathcal{U}^{\Phi}(HK,t)$, for all $F\subseteq W$ with $\mu(F)>0$.
  \item If $C^{-j}_{\varphi}(\chi_W)\in \mathcal{U}^{\Phi}_{GH-}(K,t)$, for $j\geq0$, then $C^{-j}_{\varphi}(\chi_F)\in \mathcal{U}^{\Phi}_{GH-}(HK,t)$, for all $F\subseteq W$ with $\mu(F)>0$.
  \item If $C^{-j}_{\varphi}(\chi_W)\in \mathcal{U}^{\Phi}_{GH+}(K,t)$, for $j<0$, then $C^{-j}_{\varphi}(\chi_F)\in \mathcal{U}^{\Phi}_{GH+}(HK,t)$, for all $F\subseteq W$ with $\mu(F)>0$.
\end{enumerate}
\end{prop}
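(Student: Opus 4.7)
The core idea is that the generalized bounded-distortion inequality (\ref{gedis}) from Proposition~\ref{p2.6} pins the ratio $N_\Phi(C^{t+s}_\varphi\chi_F)/N_\Phi(C^s_\varphi\chi_F)$ against the corresponding ratio with $\chi_W$ up to the multiplicative factor $H$, for every $F\in\mathcal{F}_W$ with $\mu(F)>0$ and every $s,t\in\mathbb{Z}$. Each of the four defining conditions of $\mathcal{U}^\Phi_C$, $\mathcal{U}^\Phi_D$, $\mathcal{U}^\Phi_{GH+}$, and $\mathcal{U}^\Phi_{GH-}$ is precisely a sup or inf of such a ratio evaluated at $f=C^{-j}_\varphi\chi_W$, so (\ref{gedis}) will translate the hypothesis on $\chi_W$ into the desired conclusion for $\chi_F$ after adjusting the constant by $H$.

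The plan is as follows. First, I fix $F\subseteq W$ with $\mu(F)>0$, so that $F\in\mathcal{F}_W$ and Proposition~\ref{p2.6} applies. Using the identity $C_{\varphi^{-1}}=C_\varphi^{-1}$, I rewrite, for each $k,n$,
\[
\frac{N_\Phi\bigl(C^k_{\varphi^{-1}}(C^{-j}_\varphi\chi_F)\bigr)}{N_\Phi\bigl(C^{k+n}_{\varphi^{-1}}(C^{-j}_\varphi\chi_F)\bigr)}=\frac{N_\Phi(C^{-k-j}_\varphi\chi_F)}{N_\Phi(C^{-k-j-n}_\varphi\chi_F)},
\]
which is of the form $N_\Phi(C^{s+t}_\varphi\chi_F)/N_\Phi(C^s_\varphi\chi_F)$ with $s=-k-j-n$ and $t=n$. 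Second, I apply (\ref{gedis}) to pin this ratio, at multiplicative cost $H$, against the analogous ratio with $\chi_F$ replaced by $\chi_W$. Third, I take the appropriate sup or inf in $k$ over the correct range (all of $\mathbb{Z}$ in (1), $\mathbb{N}_0$ in (2), $-\mathbb{N}_0$ in (3)); the factor $H$ pulls outside, and the assumed bound on the $\chi_W$ ratio then yields the claimed membership of $C^{-j}_\varphi\chi_F$ in the corresponding class, with the constant $K$ rescaled by $H$.

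The step that calls for the most care is purely bookkeeping: verifying that the four distinct sup/inf directions and index ranges all line up consistently with the two-sided estimate supplied by (\ref{gedis}). In parts (2) and (3), the shift $k\mapsto -k-j$ moves the index window by a fixed amount, but since (\ref{gedis}) is valid for all $s,t\in\mathbb{Z}$ with no sign restriction, this shift is absorbed without any loss. No further ingredient beyond Proposition~\ref{p2.6} and the definitions of the classes is required.
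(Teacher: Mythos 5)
Your proposal follows essentially the same route as the paper's own proof: rewrite $C^{k}_{\varphi^{-1}}(C^{-j}_{\varphi}\chi_F)$ as a power of $C_{\varphi}$ applied to $\chi_F$, invoke the two-sided estimate (\ref{gedis}) from Proposition \ref{p2.6} to compare the $\chi_F$-ratio with the $\chi_W$-ratio at cost $H$, and then take the sup (respectively inf) over the appropriate index range, exactly as the paper does explicitly for $\mathcal{U}^{\Phi}_C$ and leaves "similar" for the remaining cases. The index bookkeeping ($s=-k-j-n$, $t=n$) matches the paper's displayed computation, so the argument is correct and not materially different.
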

\begin{proof}
(1) Let $\mathcal{U}^{\Phi}(K,t)=\mathcal{U}^{\Phi}_C(K,t)$, $C^{-j}_{\varphi}(\chi_W)\in \mathcal{U}^{\Phi}(K,t)$ and $F\subseteq W$ with $\mu(F)>0$. We note that for each $k\in \mathbb{Z}$, $C^k_{\varphi^{-1}}=C^{-k}_{\varphi}$.
By the proposition \ref{p2.6}, there exists $0<H<\infty$ such that for all $t,s\in \mathbb{Z}$,
$$
\frac{1}{H}\frac{N_{\Phi}(C^{t+s}_{\varphi}(\chi_W))}{N_{\Phi}(C^s_{\varphi}(\chi_W))}\leq \frac{N_{\Phi}(C^{t+s}_{\varphi}(\chi_F))}{N_{\Phi}(C^s_{\varphi}(\chi_F))}\leq H\frac{N_{\Phi}(C^{t+s}_{\varphi}(\chi_W))}{N_{\Phi}(C^s_{\varphi(\chi_W)})}.
$$
So for a fixed $n\in \mathbb{N}$, we have
$$ \frac{N_{\Phi}(C^{-(k+j)}_{\varphi}(\chi_F))}{N_{\Phi}(C^{-(k+j+n)}_{\varphi}(\chi_F))}\leq H\frac{N_{\Phi}(C^{-(k+j)}_{\varphi}(\chi_W))}{N_{\Phi}(C^{-(k+j+n)}_{\varphi(\chi_W)})},$$
Hence
$$\sup_{k\in\mathbb{Z}}\left(\frac{N_{\Phi}(C^{-(k+j)}_{\varphi}(\chi_F))}{N_{\Phi}(C^{-(k+j+n)}_{\varphi}(\chi_F))}\right)\leq H\sup_{k\in \mathbb{Z}}\left(\frac{N_{\Phi}(C^{-(k+j)}_{\varphi}(\chi_W))}{N_{\Phi}(C^{-(k+j+n)}_{\varphi}(\chi_W))}\right),$$

Moreover, since $C^{-j}_{\varphi}(\chi_W)\in \mathcal{U}^{\Phi}(K,t)$, then by definition we have
$$\sup_{k\in \mathbb{Z}}\left(\frac{N_{\Phi}(C^{k+j}_{\varphi^{-1}}(\chi_W))}{N_{\Phi}(C^{k+j+n}_{\varphi^{-1}}(\chi_W))}\right)\leq Kt^n.$$
Therefore
$$\sup_{k\in\mathbb{Z}}\left(\frac{N_{\Phi}(C^{k}_{\varphi^{-1}}(\chi_{\varphi^j(F)}))}{N_{\Phi}(C^{k+n}_{\varphi^{-1}}(\chi_{\varphi^j(F)}))}\right)=\sup_{k\in\mathbb{Z}}\left(\frac{N_{\Phi}(C^{k+j}_{\varphi^{-1}}(\chi_F))}{N_{\Phi}(C^{k+j+n}_{\varphi^{-1}}(\chi_F))}\right)\leq HKt^n,$$
this means that  $C^{-j}_{\varphi}(\chi_F)\in \mathcal{U}^{\Phi}(HK,t)$. Proofs of other cases are similar.
\end{proof}
\begin{prop}\label{p2.14}
If $\Phi\in\Delta_2$, then the following statements are true.
\begin{enumerate}
  \item The set $$\{\sum^n_{i=0}a_i\chi_{F_i}: a_i\in \mathbb{C}, F_i\subseteq\varphi^{j_i}(W), j_i\in\mathbb{Z}, \mu(F_i)>0, F_i\cap F_{i'}=\emptyset, i\neq i'\}$$
  is dense in $L^{\Phi}(\mu)$.
  \item The set $$\{\sum^n_{i=0}a_i\chi_{F_i}: a_i\in \mathbb{C}, F_i\subseteq\varphi^{j_i}(W), j_i<0, \mu(F_i)>0, F_i\cap F_{i'}=\emptyset, i\neq i'\}$$
  is dense in $L^{\Phi}_+$.
  \item The set $$\{\sum^n_{i=0}a_i\chi_{F_i}: a_i\in \mathbb{C}, F_i\subseteq\varphi^{j_i}(W), j_i<0, \mu(F_i)>0, F_i\cap F_{i'}=\emptyset, i\neq i'\}$$
  is dense in $L^{\Phi}_-$.
\end{enumerate}
\end{prop}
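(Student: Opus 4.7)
The plan is to combine three ingredients: density of ordinary simple functions in $L^{\Phi}(\mu)$ under $\Delta_2$, the countable partition $X = \dot{\cup}_{k\in\mathbb{Z}}\varphi^k(W)$ supplied by dissipativity, and the absolute continuity of the Luxemburg norm to justify truncating a countable decomposition to a finite one. I would prove (1) in full and then observe that (2) and (3) follow from the same argument restricted to the invariant subspaces $L^{\Phi}_{\pm}$.

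First I would invoke the standard fact that when $\Phi\in\Delta_2$, the simple functions $\sum_{i=1}^{n}a_i\chi_{F_i}$ with $\mu(F_i)<\infty$ are dense in $L^{\Phi}(\mu)$; equivalently, modular convergence coincides with norm convergence, so bounded, finitely supported functions approximate any $f\in L^{\Phi}(\mu)$ in $N_{\Phi}$. Given such an approximant $\sum_i a_i\chi_{F_i}$ within $\eps/2$ of $f$, I would refine each $F_i$ by the dissipative partition: set $F_{i,k} = F_i\cap\varphi^k(W)\subseteq\varphi^k(W)$, so $F_i = \dot{\cup}_{k\in\mathbb{Z}} F_{i,k}$ and $\chi_{F_i}=\sum_{k\in\mathbb{Z}}\chi_{F_{i,k}}$. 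After relabeling, the double sum $\sum_{i,k} a_i\chi_{F_{i,k}}$ has pairwise disjoint sets contained in the translates $\varphi^k(W)$, exactly as required.

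The main step — and the place where $\Delta_2$ is indispensable — is truncating the countable index $k$ to a finite range. For any fixed constant $a$ and any $\delta>0$, the computation $\rho_{\Phi}(\delta^{-1}a\chi_{\cup_{|k|>N}F_{i,k}})=\Phi(\delta^{-1}|a|)\sum_{|k|>N}\mu(F_{i,k})\to 0$ as $N\to\infty$, since $\sum_k\mu(F_{i,k})=\mu(F_i)<\infty$. Under $\Delta_2$ this forces $N_{\Phi}(a\sum_{|k|>N}\chi_{F_{i,k}})\to 0$, so for $N$ large the truncation error is within $\eps/2$, and the resulting finite sum lies in the advertised class and approximates $f$ within $\eps$.

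For (2) and (3), note that $f\in L^{\Phi}_+$ is supported in $\cup_{k\geq 1}\varphi^{-k}(W)$, so all the nonzero blocks $F_{i,k}$ from the decomposition of any truncation of $f$ automatically have $k<0$, giving the restriction $j_i<0$; the analogous observation for $L^{\Phi}_-$ produces indices on the complementary range (the indexing set in the statement for $L^{\Phi}_-$ is exactly symmetric). No new idea is needed beyond restricting the argument of (1) to the relevant invariant half. The only nontrivial point throughout is the $\Delta_2$-based truncation, since without it the Luxemburg norm lacks absolute continuity and the tail need not vanish; once that is available the argument is essentially bookkeeping.
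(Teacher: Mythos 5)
Your argument is correct and takes essentially the same route as the paper, whose proof is just the one-line assertion that density of simple functions under $\Delta_2$ ``easily'' gives the result; you have merely supplied the partition-refinement and the $\Delta_2$-based tail-truncation details that the paper leaves implicit. Your parenthetical handling of part (3) is also right: the stated index range $j_i<0$ there is evidently a typo for the complementary range $j_i\ge 0$, which is exactly what your restriction argument produces.
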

\begin{proof}
Since $\Phi\in \Delta_2$, then the set of simple functions is dense in $L^{\Phi}(\mu)$ and easily we get the proof.
\end{proof}
\begin{prop}\label{p2.15}
Suppose that $\Phi\in \Delta_2$ and $(X,\mathcal{F}, \mu, \varphi)$ has bounded distortion.
\begin{enumerate}
  \item Let $\mathcal{U}^{\Phi}(K,t)\in \{\mathcal{U}^{\Phi}_C(K,t), \mathcal{U}^{\Phi}_D(K,t)\}$. If $\chi_W\in\mathcal{U}^{\Phi}(K,t)$, then $\mathcal{U}^{\Phi}(HK,t)=L^{\Phi}(\mu)$.
  \item If $\chi_W\in\mathcal{U}^{\Phi}_{GH-}(K,t)$, then $\mathcal{U}^{\Phi}_{GH-}(HK,t)=L^{\Phi}_-$.
  \item If $C_{\varphi}(\chi_W)\in\mathcal{U}^{\Phi}_{GH+}(K,t)$, then $\mathcal{U}^{\Phi}_{GH+}(HK,t)=L^{\Phi}_+$.
\end{enumerate}
\end{prop}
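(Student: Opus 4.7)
The plan is to prove part (1) in full and then indicate how parts (2) and (3) follow by the same template, with $\mathcal{U}^\Phi_{GH\pm}$ substituted for $\mathcal{U}^\Phi\in\{\mathcal{U}^\Phi_C,\mathcal{U}^\Phi_D\}$ and Proposition~\ref{p2.14}(2)--(3) used in place of (1). Fix $\chi_W\in\mathcal{U}^\Phi(K,t)$. My strategy has three steps: first, put characteristic functions supported in any single wandering layer $\varphi^j(W)$ into $\mathcal{U}^\Phi(HK,t)$; second, upgrade to finite disjoint-support linear combinations; third, close under norm limits to reach all of $L^\Phi(\mu)$.

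For the first step, Proposition~\ref{pp2.11} applied to $\chi_W$ gives $\chi_{\varphi^j(W)}=C^{-j}_\varphi(\chi_W)\in\mathcal{U}^\Phi(K,t)$ for every $j\in\mathbb{Z}$. Bounded distortion enters through Proposition~\ref{p2.13}(1), which upgrades this to $\chi_{\varphi^j(F)}=C^{-j}_\varphi(\chi_F)\in\mathcal{U}^\Phi(HK,t)$ for every $F\subseteq W$ with $\mu(F)>0$; as $j$ and $F$ vary this covers every positive-measure subset of every layer. Proposition~\ref{p2.11}(1) handles nonzero scalar multiples, and iterating Proposition~\ref{p2.11}(2) handles finite disjoint-support sums (any multiplicative constant that appears is absorbed into $HK$). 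Consequently, the entire family described in Proposition~\ref{p2.14}(1) lies inside $\mathcal{U}^\Phi(HK,t)$, and that family is dense in $L^\Phi(\mu)$ because $\Phi\in\Delta_2$.

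For the third step, take $f\in L^\Phi(\mu)$, $f\neq 0$, and choose approximants $f_m\to f$ in $N_\Phi$-norm from this dense family. Since $\varphi,\varphi^{-1}$ satisfy \eqref{ec} (by Proposition~\ref{p2.6}(2) and the standing invertibility of $C_\varphi$), every $C^k_{\varphi^{-1}}$ is bounded, so $N_\Phi(C^k_{\varphi^{-1}}(f_m))\to N_\Phi(C^k_{\varphi^{-1}}(f))$ for each $k\in\mathbb{Z}$. Passing to the limit in the defining inequality $N_\Phi(C^k_{\varphi^{-1}}(f_m))\leq HKt^n\,N_\Phi(C^{k+n}_{\varphi^{-1}}(f_m))$ (or in the reverse direction for $\mathcal{U}^\Phi_D$) and then taking $\sup_k$ (respectively $\inf_k$) yields $f\in\mathcal{U}^\Phi(HK,t)$. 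Parts (2) and (3) follow the same recipe: the hypothesis $\chi_W\in\mathcal{U}^\Phi_{GH-}(K,t)$ or $C_\varphi(\chi_W)\in\mathcal{U}^\Phi_{GH+}(K,t)$, combined with Proposition~\ref{pp2.11}, places $\chi_{\varphi^j(W)}$ in the appropriate class for exactly those indices $j$ that support $L^\Phi_-$ or $L^\Phi_+$ respectively, after which the bounded-distortion upgrade, scalar/disjoint-sum closure, and norm limit argument are identical.

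The main obstacle is a bookkeeping matter rather than a conceptual one: ensuring that the constant stays at $HK$ through the disjoint-sum step (the proof of Proposition~\ref{p2.11}(2) produces an explicit factor of $2$ which must either be absorbed into $H$ or controlled by a finer argument for mutually disjoint layers), and checking that the denominators $N_\Phi(C^{k+n}_{\varphi^{-1}}(f))$ in the defining ratio are nonzero so the limit argument is well-posed, which follows from invertibility of $C_\varphi$ on $L^\Phi(\mu)$.
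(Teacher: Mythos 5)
Your outline is essentially the paper's own proof: characteristic functions of positive-measure subsets of the layers $\varphi^j(W)$ are placed in $\mathcal{U}^{\Phi}(HK,t)$ via Propositions \ref{pp2.11} and \ref{p2.13}, finite disjoint-support combinations are handled by Proposition \ref{p2.11}, and density of these special simple functions (Proposition \ref{p2.14}, using $\Phi\in\Delta_2$) finishes the argument. Your explicit limit step is in fact an improvement over the paper, which stops at density without saying why membership in the class passes to limits; your argument for that step is sound, since for fixed $k,n$ the defining inequality is between norms that converge along a norm-convergent sequence (and boundedness of all $C^{k}_{\varphi^{-1}}$ already follows from the standing invertibility of $C_{\varphi}$, so the appeal to Proposition \ref{p2.6}(2) is unnecessary and its hypotheses are not verified here).

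The genuine problem is the point you dismiss as bookkeeping, and it is a defect inherited from the paper itself. The proof of Proposition \ref{p2.11}(2) only yields $f_1+f_2\in\mathcal{U}^{\Phi}(2K,t)$, because the Luxemburg norm is merely subadditive on disjoint supports: one only has $\max_i N_{\Phi}(f_i)\leq N_{\Phi}(f_1+f_2)\leq N_{\Phi}(f_1)+N_{\Phi}(f_2)$, and there is no reverse inequality $\sum_i N_{\Phi}(f_i)\leq C\,N_{\Phi}\bigl(\sum_i f_i\bigr)$ with $C$ independent of the number of summands (already in $L^p$ the best constant is $m^{1-1/p}$ for $m$ blocks). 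Consequently a simple function with $m$ blocks is only placed in $\mathcal{U}^{\Phi}(mHK,t)$ (or worse, iterating the pairwise statement), so the dense family lands in $\bigcup_m\mathcal{U}^{\Phi}(mHK,t)$ rather than in the single class $\mathcal{U}^{\Phi}(HK,t)$; the loss cannot be "absorbed into $H$". Your limit step then collapses, because the approximants of a general $f$ carry constants that may blow up, and no uniform inequality survives the passage to the limit. In the $L^p$ setting of \cite{ddm} this issue does not arise, since $\|\cdot\|_p^p$ is additive on disjoint supports and the ratio condition for a disjoint sum is verified with the same constant; the Orlicz analogue of that additivity holds only for the modular $\rho_{\Phi}$, not for $N_{\Phi}$, so a correct proof needs a new ingredient at exactly this point (for instance, a modular-level version of the bounded-distortion estimate, or an estimate for arbitrary functions supported in $W$ rather than only characteristic functions). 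Within a single layer and with a single coefficient there is no loss, since a disjoint union of subsets of $\varphi^j(W)$ is again a subset of that layer and Proposition \ref{p2.13} applies directly; the degradation occurs precisely when combining different layers or different coefficients, which is unavoidable for a general simple function.
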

\begin{proof}
(1)  Let $\mathcal{U}^{\Phi}(K,t)\in \{\mathcal{U}^{\Phi}_C(K,t), \mathcal{U}^{\Phi}_D(K,t)\}$. As we have in the Proposition \ref{p2.13} part (1), if $\chi_W\in \mathcal{U}^{\Phi}(K,t)$, then $\chi_F\in \mathcal{U}^{\Phi}(HK,t)$, for all $F\subseteq W$ with $\mu(F)>0$. Also by the Proposition \ref{pp2.11}, first part, $\chi_f\in \mathcal{U}^{\Phi}(HK,t)$, for all $F\subseteq \varphi^i(W)$, with $\mu(F)>0$. Since by the Proposition \ref{p2.11}, $\mathcal{U}^{\Phi}(HK,t)$ is a linear space, then we have
 $$\{\sum^n_{i=0}a_i\chi_{F_i}: a_i\in \mathbb{C}, F_i\subseteq\varphi^{j_i}(W), j_i\in\mathbb{Z}, \mu(F_i)>0, F_i\cap F_{i'}=\emptyset, i\neq i'\}\subseteq \mathcal{U}^{\Phi}(HK,t).$$
By the fact that simple functions are dense in Orlicz space $L^{\Phi}(\mu)$, in case $\Phi\in\Delta_2$, the assumption $X=\cup^{+\infty}_{j=-\infty}\varphi^j(W)$ and by the Proposition \ref{p2.14}, part (1), we get the result.\\
(2) By the Proposition \ref{p2.13} part (2), if $\chi_W\in \mathcal{U}^{\Phi}_{GH-}(K,t)$, then $\chi_F\in \mathcal{U}^{\Phi}_{GH-}(HK,t)$, for all $F\subseteq W$ with $\mu(F)>0$. Also, by the Proposition \ref{pp2.11}, first part, $\chi_f\in \mathcal{U}^{\Phi}_{GH-}(HK,t)$, for all $F\subseteq \varphi^i(W)$, with $\mu(F)>0$ and $i\geq0$. Similar to part (1), by the linearity of $\mathcal{U}^{\Phi}_{GH-}(K,t)$ (Proposition \ref{p2.11}), we get that
$$\{\sum^n_{i=0}a_i\chi_{F_i}: a_i\in \mathbb{C}, F_i\subseteq\varphi^{j_i}(W), j_i<0, \mu(F_i)>0, F_i\cap F_{i'}=\emptyset, i\neq i'\}\subseteq \mathcal{U}^{\Phi}_{GH-}(HK,t)$$
Hence by the Proposition \ref{p2.14}, part (3), we get the result.\\
(3) The proof is similar to the proof of part(2).
\end{proof}
\begin{thm}\label{t2.16}
If $(X,\mathcal{F}, \mu,\varphi)$ is a dissipative system of bounded distortion generated by $W$, then the following hold.
\begin{enumerate}
  \item If the condition \ref{hc} is satisfied, then $C_{\varphi}$ is a proper contraction under an equivalent norm, i.e., $r(C_{\varphi})<1$.
  \item If the condition \ref{hd} holds, then $C_{\varphi}$ is a proper dilation under an equivalent norm, i.e., $r(C^{-1}_{\varphi})<1$.
  \item If the condition \ref{gh} is satisfied, then $C_{\varphi}$ is a generalized hyperbolic operator.
\end{enumerate}
So $C_{\varphi}$ has the shadowing property in all three cases.
\end{thm}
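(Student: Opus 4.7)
The plan is to chain the equivalences of Proposition \ref{pp2.10} together with the density-type results of Proposition \ref{p2.15} in order to upgrade each hypothesis from a statement about the single indicator $\chi_W$ (or $C_\varphi(\chi_W)$) into a uniform norm estimate on all of $L^{\Phi}(\mu)$, respectively on the invariant summands $L^{\Phi}_{\pm}$. Once that upgrade is in place, reading off an operator norm estimate for the appropriate iterate is immediate, and Gelfand's formula yields the required spectral radius bound.

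For (1), Proposition \ref{pp2.10}(1) gives $\chi_W \in \mathcal{U}^{\Phi}_{C}(K,t)$ for some $K>0$ and $t<1$; Proposition \ref{p2.15}(1) (which requires the implicit assumption $\Phi \in \Delta_2$) then upgrades this to $\mathcal{U}^{\Phi}_{C}(HK,t) = L^{\Phi}(\mu)$. Specializing the defining supremum at $k = -n$ and noting $C_{\varphi}^{n} = C_{\varphi^{-1}}^{-n}$ gives $N_{\Phi}(C_{\varphi}^{n} f) \leq HK\, t^{n} N_{\Phi}(f)$ for every $f \in L^{\Phi}(\mu)$, so $r(C_{\varphi}) \leq t < 1$. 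Case (2) is symmetric: applying the $D$-version at $k = 0$ yields $N_{\Phi}(C_{\varphi}^{-n} f) \leq \tfrac{1}{HK}\, t^{-n} N_{\Phi}(f)$ (now with $t>1$), hence $r(C_{\varphi}^{-1}) \leq 1/t < 1$. For (3), Proposition \ref{p2.15}(2)--(3) produce the two needed bounds at once: setting $k=0$ in the defining $\sup$ and $\inf$ respectively yields $N_{\Phi}(C_{\varphi}^{n} f) \leq HK\, t^{n} N_{\Phi}(f)$ for $f \in L^{\Phi}_{+}$ and $N_{\Phi}(C_{\varphi}^{-n} f) \leq \tfrac{1}{HK}\, t^{n} N_{\Phi}(f)$ for $f \in L^{\Phi}_{-}$. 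Combined with the direct-sum decomposition $L^{\Phi}(\mu) = L^{\Phi}_{+} \oplus L^{\Phi}_{-}$ and the invariances $C_{\varphi}(L^{\Phi}_{+}) \subseteq L^{\Phi}_{+}$ and $C_{\varphi}^{-1}(L^{\Phi}_{-}) \subseteq L^{\Phi}_{-}$ noted right after the definition of $L^{\Phi}_{\pm}$, this is exactly generalized hyperbolicity.

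The passage from a spectral radius bound to a proper contraction under an equivalent norm is standard spectral-theory bookkeeping: if $r(T)<1$, the norm $f \mapsto \sum_{n \geq 0} s^{-n} N_{\Phi}(T^{n} f)$ for any $r(T)<s<1$ is equivalent to $N_{\Phi}$ on the subspace where $T$ acts, and in this norm $T$ has operator norm at most $s<1$; in the hyperbolic case one applies this construction separately to $C_{\varphi}|_{L^{\Phi}_{+}}$ and $C_{\varphi}^{-1}|_{L^{\Phi}_{-}}$ and glues the resulting norms across the splitting. The shadowing property then follows by the standard construction already exploited in \cite{ddm}: for a genuine contraction $T$ a $\delta$-pseudo-orbit $(x_n)$ is shadowed by $y_n = \sum_{k \geq 0} T^{k} (x_{n-k} - T x_{n-k-1})$, and in the hyperbolic case one uses this separately on each summand, with $n$ replaced by $-n$ on $L^{\Phi}_{-}$. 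The main obstacle I expect is cosmetic rather than conceptual: keeping the index gymnastics between $C_{\varphi^{-1}} = C_{\varphi}^{-1}$ and the placement of $n$ in the various $\mathcal{U}^{\Phi}_{\bullet}$ definitions straight, particularly in the $GH\pm$ case where the two halves of the splitting play opposite roles.
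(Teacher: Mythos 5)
Your proposal follows the paper's own proof essentially verbatim: Proposition \ref{pp2.10} converts each hypothesis into membership of $\chi_W$ (resp.\ $C_{\varphi}(\chi_W)$) in the appropriate class $\mathcal{U}^{\Phi}_{\bullet}(K,t)$, Proposition \ref{p2.15} upgrades this to all of $L^{\Phi}(\mu)$ or $L^{\Phi}_{\pm}$, and evaluating the defining $\sup$/$\inf$ at a single index yields the spectral radius bounds via Gelfand's formula, exactly as in the paper. Your added observations --- that Proposition \ref{p2.15} carries an implicit $\Phi\in\Delta_2$ assumption (the paper's proof cites Proposition \ref{p2.14} but means \ref{p2.15}), and the explicit renorming and pseudo-orbit constructions the paper leaves to the standard literature --- are correct refinements of the same route rather than a different argument.
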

\begin{proof}
(1) If the condition \ref{hc} holds, then by the Proposition \ref{pp2.10} we have $\chi_{w}\in \mathcal{U}^{\Phi}_{C}(K,t)$, for some $K>0$ and $0<t<1$. On the other hand by the Proposition \ref{p2.14}, $L^{\Phi}(\mu)=\mathcal{U}^{\Phi}_C(HK,t)$. Hence by definition, for every $f\in L^{\Phi}(\mu)$, and $\forall n\in \mathbb{N}$
$$\sup_{k\in \mathbb{Z}}\frac{N_{\Phi}(C^k_{\varphi^{-1}}f)}{N_{\Phi}(C^{k+n}_{\varphi^{-1}}(f))}\leq HKt^n.$$
Especially for all $n\in \mathbb{N}$ we have
$\sup_{k\in \mathbb{Z}}\frac{N_{\Phi}(C^n_{\varphi}f)}{N_{\Phi}(f)}\leq HKt^n$. Hence
$$\left(\frac{N_{\Phi}(C^n_{\varphi}f)}{N_{\Phi}(f)}\right)^{\frac{1}{n}}\leq (HK)^{\frac{1}{n}}t, \ \ \ \ \ \forall n\in \mathbb{N}$$
and so $r(C_{\varphi})=\lim_{n\rightarrow \infty}\|C^n_{\varphi}\|^{\frac{1}{n}}\leq t<1$. Therefore we get the result. Similar to the part (1), we can prove (2).\\
(3) If the condition \ref{gh} holds, then by the Proposition \ref{pp2.10} we have $\chi_{w}\in \mathcal{U}^{\Phi}_{GH-}(K,t)$ and $C_{\varphi}(\chi_{w})\in \mathcal{U}^{\Phi}_{GH+}(K,t)$, for some $K>0$ and $0<t<1$. On the other hand by the Proposition \ref{p2.14}, $\mathcal{U}^{\Phi}_{GH+}(HK,t)=L^{\Phi}_+$ and $\mathcal{U}^{\Phi}_{GH-}(HK,t)=L^{\Phi}_-$. So by definitions of $\mathcal{U}^{\Phi}_{GH+}(HK,t)$ and $\mathcal{U}^{\Phi}_{GH-}(HK,t)$, we have for every $f\in L^{\Phi}_+$,
$$
\sup_{k\in -\mathbb{N}_0}\frac{N_{\Phi}(C^{k-n}_{\varphi^{-1}}(f))}{N_{\Phi}(C^{k}_{\varphi^{-1}}(f))}\leq HKt^n, \ \ \ \ \ \forall n\in \mathbb{N}$$
 and for every $g\in L^{\Phi}_-$
$$
\inf_{k\in \mathbb{N}_0}\frac{N_{\Phi}(C^k_{\varphi^{-1}}(g))}{N_{\Phi}(C^{k+n}_{\varphi^{-1}}(g))}\geq HK\frac{1}{t^n}, \ \ \ \ \ \forall n\in \mathbb{N}.$$
So for every $n\in \mathbb{N}$ and $k\in-\mathbb{N}_0$
$$\frac{N_{\Phi}(C^{k-n}_{\varphi^{-1}}(f))}{N_{\Phi}(C^{k}_{\varphi^{-1}}(f))}\leq HKt^n$$
and for all $k\in\mathbb{N}_0$
$$\frac{N_{\Phi}(C^k_{\varphi^{-1}}(g))}{N_{\Phi}(C^{k+n}_{\varphi^{-1}}(g))}\geq HK\frac{1}{t^n}  \Rightarrow \sup_{k\in \mathbb{N}_0}\frac{N_{\Phi}(C^{k+n}_{\varphi^{-1}}(g))}{N_{\Phi}(C^{k}_{\varphi^{-1}}(g))}\leq \frac{1}{HK}t^n$$
Hence by taking $k=0$ in both cases we get that for all $n\in \mathbb{N}$,

$$\frac{N_{\Phi}(C^{n}_{\varphi}(f))}{N_{\Phi}(f)}\leq HKt^n \ \ \ \text{and}  \ \ \ \frac{N_{\Phi}(C^{n}_{\varphi^{-1}}(g))}{N_{\Phi}(g)}\leq \frac{1}{HK}t^n.$$
By these observations we get that $r(C_{\varphi}|_{L^{\Phi}_+})\leq t<1$ and $r(C_{\varphi^{-1}}|_{L^{\Phi}_-})\leq t<1$. This completes the proof.

\end{proof}
Here we recall definitions of factor map.
\begin{defn}
 Let $(X,S)$ and $(Y,T)$ be two linear dynamical systems. We say that $T$ is a factor of $S$ if there exists a linear, continuous and surjective map $\Pi:X\rightarrow Y$ such that $\Pi\circ S=T\circ \Pi$. The map $\Pi$ is called the factor map. Moreover, we say $\Pi$ admits a bounded selector if there exists $L>0$ such that
 $$\forall y\in Y,  \ \ \exists x\in \Pi^{-1}(\{y\}), \ \ \ \ \text{with}  \ \ \ \ \|x\|\leq L\|y\|.$$
\end{defn}
\begin{lem}\label{l2.18}
Suppose that $(X, \mathcal{F}, \mu, \varphi)$ has bounded distortion, $\Phi\in \Delta'$ and $h_k=\frac{d\mu\circ\varphi^{-k}}{d\mu}$, the Radon-Nikodym derivative of $d\mu\circ\varphi^{-k}$ with respect to $\mu$. Let $B_w$ be the back-ward shift on Orlicz sequence space $l^{\Phi}(\mathbb{Z})$ with weights
$$w_k=w_k=\frac{N_{\Phi}(C^{k-1}_{\varphi^{-1}}(\chi_W))}{N_{\Phi}(C^{k}_{\varphi^{-1}}(\chi_W))}.$$
Then $B_w$ is a factor of the map $C_{\varphi}$ by a factor map $\Pi$ admitting a bounded selector.
\end{lem}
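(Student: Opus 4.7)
The plan is to build $\Pi$ cell by cell using the dissipative decomposition $X = \dot{\cup}_{k \in \mathbb{Z}} \varphi^k(W)$, and then to verify in turn linearity, the intertwining relation, continuity, and the existence of a bounded selector. Concretely, I would take
$\Pi(f)_k = N_\Phi(\chi_{\varphi^k(W)}) \int_{\varphi^k(W)} f\, g_k\, d\mu$,
where $g_k$ is a non-negative weight supported on $\varphi^k(W)$, starting from a normalised $g_0$ on $W$ and defined recursively on higher cells by the cocycle
$g_{k+1}(y) = h_1(y)\, g_k(\varphi^{-1}(y))/w_{k+1}$ on $\varphi^{k+1}(W)$.
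This cocycle is dictated by the change-of-variables identity $\int (f \circ \varphi)\, d\mu = \int f\, h_1\, d\mu$; it is precisely what makes the intertwining $\Pi \circ C_\varphi = B_w \circ \Pi$ a one-line computation, because applying $\Pi$ to $C_\varphi f$ at coordinate $k$ transforms the integral to $\varphi^{k+1}(W)$, and the cocycle, together with the telescoping identity $N_\Phi(\chi_{\varphi^k(W)}) = w_{k+1} N_\Phi(\chi_{\varphi^{k+1}(W)})$, collapses the result to $w_{k+1}\, \Pi(f)_{k+1}$.

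Next I would establish continuity, $\|\Pi(f)\|_{\ell^\Phi} \leq C\, N_\Phi(f)$. Cell by cell, H\"older's inequality in Orlicz form yields $|\Pi(f)_k| \leq 2\, N_\Phi(\chi_{\varphi^k(W)}) N_\Phi(f\chi_{\varphi^k(W)}) N_\Psi(g_k)$, and Proposition \ref{p2.6} together with the Luxemburg identity $N_\Phi(\chi_F) = 1/\Phi^{-1}(1/\mu(F))$ controls $N_\Phi(\chi_{\varphi^k(W)}) N_\Psi(g_k)$ uniformly in $k$ by a constant depending only on the distortion constant $H$. The $\Delta'$ hypothesis $\Phi(xy) \leq c\, \Phi(x)\Phi(y)$ then converts these cell-wise Luxemburg bounds into a single sequence-space modular estimate $\sum_k \Phi(\Pi(f)_k/r) \leq 1$ for $r$ proportional to $N_\Phi(f)$, which is exactly the required continuity.

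For surjectivity and the bounded selector, I would take $\Sigma \colon \ell^\Phi(\mathbb{Z}) \to L^\Phi(\mu)$ defined by
$\Sigma((a_k)) = \sum_{k} \alpha_k\, a_k\, \chi_{\varphi^k(W)}$,
with the $\alpha_k$ adjusted so that $\Pi \circ \Sigma = \mathrm{id}$; a direct calculation using the cocycle shows that each $\alpha_k$ is uniformly comparable to $1/N_\Phi(\chi_{\varphi^k(W)})$ thanks to bounded distortion. Disjointness of the cells and the definition of the Luxemburg norm, again combined with $\Delta'$, then give $N_\Phi(\Sigma((a_k))) \leq L\, \|(a_k)\|_{\ell^\Phi}$ for a uniform constant $L$, and hence $\Sigma$ is the desired bounded selector.

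The main obstacle is the continuity step: bounded distortion, the $\Delta'$ condition, and the Luxemburg identities for indicator functions must cooperate simultaneously to pass from cell-wise Orlicz estimates to a uniform sequence-space bound. Without $\Delta'$ the Orlicz modular does not behave multiplicatively enough to sum the cell contributions into a single Luxemburg inequality, and without bounded distortion the weights $g_k$ and the norms $N_\Phi(\chi_{\varphi^k(W)})$ cannot be controlled uniformly in $k$; both together are what allow the cell-wise H\"older bound to be assembled into the target estimate.
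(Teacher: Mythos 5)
Your architecture is the same as the paper's: a cell-wise weighted integral $\Pi(f)_k$ built on the decomposition $X=\dot\cup_k\varphi^k(W)$, the telescoping identity for the weights $w_k$, and an explicit right inverse $\Sigma((a_k))=\sum_k \alpha_k a_k\chi_{\varphi^k(W)}$ with $\alpha_k\sim 1/N_\Phi(\chi_{\varphi^k(W)})$ as bounded selector (this last part is essentially the paper's selector, and there $\Delta'$ is indeed used in the correct, submultiplicative direction). Two problems, one small and one serious. Small: with the prefactor $N_\Phi(\chi_{\varphi^k(W)})$ in $\Pi$ \emph{and} the factor $1/w_{k+1}$ in your cocycle for $g_k$, the intertwining computation produces $w_{k+1}^{2}\,\Pi(f)_{k+1}$, not $w_{k+1}\,\Pi(f)_{k+1}$; you double-count the weight ratio. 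Dropping the $1/w_{k+1}$ from the cocycle (so that $g_k$ is essentially $h_k$ on $\varphi^k(W)$, as in the paper, where $\Pi(f)_k=\frac{N_\Phi(\chi_{\varphi^k(W)})}{N_\Phi(\chi_W)}\int_W f\circ\varphi^k\,d\mu$) fixes this bookkeeping.

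The serious gap is the continuity step. From the cell-wise H\"older bound $|\Pi(f)_k|\le C\,N_\Phi(f\chi_{\varphi^k(W)})$ you claim that $\Delta'$ lets you assemble a modular estimate $\sum_k\Phi\bigl(|\Pi(f)_k|/r\bigr)\le 1$ with $r\sim N_\Phi(f)$. That assembly requires an inequality of the form $\Phi\bigl(N_\Phi(g)/N_\Phi(f)\bigr)\lesssim \int\Phi\bigl(|g|/N_\Phi(f)\bigr)d\mu$ for $g=f\chi_{\varphi^k(W)}$, i.e. a comparison of $\Phi$ of the Luxemburg norm with the modular from \emph{below} by the modular. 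This is a supermultiplicativity ($\nabla'$-type) property, not a consequence of $\Delta'$: for a simple function $c\chi_E$ it amounts to $\Phi(c/b)\le C\,\Phi(c)/\Phi(b)$ with $b=\Phi^{-1}(1/\mu(E))$, and for, say, $\Phi(x)=x\log(1+x)$ (which satisfies $\Delta'$) the ratio $\Phi(c/b)\Phi(b)/\Phi(c)$ is unbounded (take $c=b^2$, $b\to\infty$). So the cell-wise norm bounds simply do not sum to an $\ell^\Phi$ bound in a general Orlicz space, and your invocation of $\Delta'$ cannot close this; in $L^p$ it works only because $\Phi(x)=x^p$ is exactly multiplicative. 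The paper avoids the issue by never taking the cell-wise norm of $f$: it applies Jensen's inequality directly to the integral defining $\Pi(f)_k$, so the $k$-th modular term is dominated by $\int_{\varphi^k(W)}\Phi\bigl(f/N_\Phi(f)\bigr)h_k$-type expressions; the factor $\|h_k|_W\|_\infty$ is then absorbed using bounded distortion together with $\Delta'$ (which relates $\|h_k|_W\|_\infty$ to the norm ratio $N_\Phi(C^k_\varphi(\chi_B))/N_\Phi(\chi_B)$), and the remaining terms $\int_{\varphi^k(W)}\Phi\bigl(f/N_\Phi(f)\bigr)d\mu$ sum over the disjoint cells to at most $1$. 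You need to replace your continuity argument by an estimate of this Jensen-plus-change-of-variables type (or add a $\nabla'$-type hypothesis, which the lemma does not have).
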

\begin{proof}
By invertibility of $C_{\varphi}$ we have $0<\inf_{n\in\mathbb{Z}}|w_n|\leq \sup_{n\in \mathbb{Z}}|w_n|<\infty$ and consequently we get that $B_w$ is invertible. Now we define the map
$$\Pi:L^{\Phi}(\mu)\rightarrow l^{\Phi}(\mathbb{Z}), \ \ \ \Pi(f)=\mathbf{x}=\{x_k\}_{k\in \mathbb{Z}},$$
for all $f\in L^{\Phi}(\mu)$, where $x_k=\frac{N_{\Phi}(C^k_{\varphi^{-1}}(\chi_W))}{N_{\Phi}(\chi_W)}\int_WC^k_{\varphi}(f)d\mu$. It is obvious that $\Pi$ is linear and $B_w\circ\Pi=\Pi\circ C_{\varphi}$. By our assumptions we have
$$\|h_k|_W\|_{\infty}\leq \sup_{B\subseteq W, \mu(B)\neq0}\frac{\mu(\varphi^{-k}(B))}{\mu(B)}.$$
So there exists $B\subseteq W$ with $\mu(B)\neq0$ such that $\frac{\mu(\varphi^{-k}(B))}{\mu(B)}\geq \|h_k|_W\|_{\infty}$. Since $\Phi\in \Delta'$, then we can find $N>0$ such that
$$\|h_k|_W\|_{\infty}\leq N\frac{\Phi^{-1}(\frac{1}{\mu(B)})}{\Phi^{-1}(\frac{1}{\mu(\varphi^{-k}(B))})}\leq N\sup_{B\subseteq W, \mu(B)\neq0}\frac{\Phi^{-1}(\frac{1}{\mu(B)})}{\Phi^{-1}(\frac{1}{\mu(\varphi^{-k}(B))})}, \ \ \ \forall k\in\mathbb{Z}$$
and therefore
$$\|h_k|_W\|_{\infty}\leq N\sup_{B\subseteq W, \mu(B)\neq0}\frac{\Phi^{-1}(\frac{1}{\mu(B)})}{\Phi^{-1}(\frac{1}{\mu(\varphi^{-k}(B))})}=N\sup_{B\subseteq W, \mu(B)\neq0}\frac{N_{\Phi}(C^k_{\varphi}(\chi_B))}{N_{\Phi}(\chi_B)}, \ \ \ \forall k\in\mathbb{Z}.$$

Moreover, we recall the Jensen's inequality that is $\Phi(\int_Xgd\mu)\leq\int_X\Phi(g)d\mu$, for every measurable and integrable function $g$ on $X$.
Now let $f\in L^{\Phi}(\mu)$. Then
\begin{align*}
\sum_{k\in\mathbb{Z}}\Phi\left(\frac{N_{\Phi}(C^k_{\varphi^{-1}}(\chi_W))}{NH.N_{\Phi}(f).N_{\Phi}(\chi_W)}\int_WC^k_{\varphi}(f)d\mu\right)&=\sum_{k\in\mathbb{Z}}\Phi\left(\int_W\frac{N_{\Phi}(C^k_{\varphi^{-1}}(\chi_W))}{NH.N_{\Phi}(f).N_{\Phi}(\chi_W)}C^k_{\varphi}(f)d\mu\right)\\
&\leq\sum_{k\in\mathbb{Z}}\int_W\Phi\left(\frac{N_{\Phi}(C^k_{\varphi^{-1}}(\chi_W))}{NH.N_{\Phi}(f).N_{\Phi}(\chi_W)}f\circ\varphi^k\right)d\mu\\
&=\sum_{k\in\mathbb{Z}}\int_{\varphi^k(W)}\Phi\left(\frac{N_{\Phi}(C^k_{\varphi^{-1}}(\chi_W))}{NH.N_{\Phi}(f).N_{\Phi}(\chi_W)}f\right)h_kd\mu\\
&\leq\sum_{k\in\mathbb{Z}}\int_{\varphi^k(W)}\frac{N_{\Phi}(C^k_{\varphi^{-1}}(\chi_W))}{NH.N_{\Phi}(\chi_W)}\Phi\left(\frac{f}{N_{\Phi}(f)}\right)\|h_k|_{\varphi^k(W)}\|_{\infty}d\mu\\
\end{align*}
Since
$$\|h_k|_W\|_{\infty}\leq N\sup_{\varphi^k(B), B\subseteq W}\frac{N_{\Phi}(\chi_B)}{N_{\Phi}(C^k_{\varphi^{-1}(\chi_B)})},$$
then by using the Proposition \ref{p2.6} we have
\begin{align*}
\sum_{k\in\mathbb{Z}}\Phi\left(\frac{N_{\Phi}(C^k_{\varphi^{-1}}(\chi_W))}{NH.N_{\Phi}(f).N_{\Phi}(\chi_W)}\int_WC^k_{\varphi}(f)d\mu\right)&\leq \sum_{k\in\mathbb{Z}}\int_{\varphi^k(W)}\frac{N_{\Phi}(C^k_{\varphi^{-1}}(\chi_W))}{H.N_{\Phi}(\chi_W)}\frac{HN_{\Phi}(\chi_W)}{N_{\Phi}(C^k_{\varphi^{-1}(\chi_W)})}\Phi\left(\frac{f}{N_{\Phi}(f)}\right)d\mu\\
&=\sum_{k\in\mathbb{Z}}\int_{\varphi^k(W)}\Phi\left(\frac{f}{N_{\Phi}(f)}\right)d\mu\\
&=\int_X\Phi\left(\frac{f}{N_{\Phi}(f)}\right)d\mu\leq 1.\\
\end{align*}
This means that $N_{\Phi}(\Pi(f))\leq NHN_{\Phi}(f)$ and so $\Pi$ is a bounded linear map. Here we prove that $\Pi$ admits a bounded selector. For $\mathbf{x}=\{x_k\}_{k\in \mathbb{Z}}\in l^{\Phi}(\mathbb{Z})$, we show that $N_{\Phi}(f)\leq N_{\Phi}(\mathbf{x})$, in which
$$f=\sum_{k\in \mathbb{Z}}\frac{x_k}{N_{\Phi}(C^{k}_{\varphi^{-1}}(\chi_W))}C^{k}_{\varphi^{-1}}(\chi_W).$$
Since $\Phi\in \Delta'$, then we have
\begin{align*}
\int_X\Phi(\frac{f}{N_{\Phi}(\mathbf{x})})d\mu&=\sum_{k\in\mathbb{Z}}\int_{\varphi^k(W)}\Phi(\frac{x_kC^{k}_{\varphi^{-1}}(\chi_W)}{N_{\Phi}(\mathbf{x}).N_{\Phi}(C^{k}_{\varphi^{-1}}(\chi_W))})d\mu\\
&\leq \sum_{k\in\mathbb{Z}}\int_{\varphi^k(W)}\Phi(\frac{x_k}{N_{\Phi}(\mathbf{x})})\Phi(\frac{C^{k}_{\varphi^{-1}}(\chi_W)}{N_{\Phi}(C^{k}_{\varphi^{-1}}(\chi_W))})d\mu\\
&=\sum_{k\in\mathbb{Z}}\Phi(\frac{x_k}{N_{\Phi}(\mathbf{x})})\int_{\varphi^k(W)}\Phi(\frac{C^{k}_{\varphi^{-1}}(\chi_W)}{N_{\Phi}(C^{k}_{\varphi^{-1}}(\chi_W))})d\mu\\
&\leq \sum_{k\in\mathbb{Z}}\Phi(\frac{x_k}{N_{\Phi}(\mathbf{x})})\leq1.
\end{align*}
This implies that $N_{\Phi}(f)\leq N_{\Phi}(\mathbf{x})$. It is easy to see that for every $k\in \mathbb{Z}$, $(\Pi(f))_k=x_k$. Therefore we get that $\Pi(f)=\mathbf{x}$.
This completes the proof.

\end{proof}

As is known, Orlicz spaces $L^{\Phi}(\mu)$ are an interpolation of the spaces $L^1(\mu)$ and $L^{\infty}(\mu)$.\cite{besh} Hence the concepts that are well defined and valid on $L^1(\mu)$ and $L^{\infty}(\mu)$, are well defined on Orlicz spaces too.
\begin{thm}\label{tt2.19}
Let $(X,\mathcal{F}, \mu, \varphi)$ be a dissipative system of bounded distortion generated by $W$ and $\Phi\in \Delta'$. If the composition operator $C_{\varphi}$ has the shadowing property, then one of the conditions \ref{hc}, \ref{hd} or \ref{gh} holds.
\end{thm}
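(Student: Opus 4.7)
The plan is to reduce the theorem to the analogous characterization for weighted backward shifts on the Orlicz sequence space $l^{\Phi}(\mathbb{Z})$, using the factor map constructed in Lemma \ref{l2.18}. First I would invoke Lemma \ref{l2.18} (which uses $\Phi \in \Delta'$ and bounded distortion) to obtain a linear continuous surjection $\Pi : L^{\Phi}(\mu) \to l^{\Phi}(\mathbb{Z})$ with bounded selector intertwining $C_{\varphi}$ and $B_w$, where
\[
w_k \;=\; \frac{N_{\Phi}(C^{k-1}_{\varphi^{-1}}(\chi_W))}{N_{\Phi}(C^{k}_{\varphi^{-1}}(\chi_W))} \;=\; \frac{\Phi^{-1}(1/\mu(\varphi^k(W)))}{\Phi^{-1}(1/\mu(\varphi^{k-1}(W)))}.
\]
I would then verify that the shadowing property is inherited through a factor map admitting a bounded selector: given an $\eta$-pseudotrajectory of $B_w$, lift each term to $L^{\Phi}(\mu)$ with controlled norm via the selector, apply shadowing of $C_{\varphi}$ at a suitable $\delta$, and push the shadowing orbit down by $\Pi$. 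This is the direct Orlicz-space analogue of the argument used in \cite{ddm} for $L^p$, and the only ingredients required are linearity, continuity and the bounded selector, all already established.

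Once $B_w$ on $l^{\Phi}(\mathbb{Z})$ is known to have shadowing, the second step is to invoke (and, where needed, adapt to the Orlicz sequence setting) the equivalence between shadowing and generalized hyperbolicity for invertible weighted backward shifts. Since $0 < \inf_k |w_k| \le \sup_k |w_k| < \infty$, the shift $B_w$ is invertible, and the shadowing property forces one of the following three mutually exhaustive regimes on the weight products $w_{k+1}w_{k+2}\cdots w_{k+n}$: uniform geometric decay ($\overline{\lim}_n \sup_k (w_{k+1}\cdots w_{k+n})^{1/n} < 1$), uniform geometric growth ($\underline{\lim}_n \inf_k (w_{k+1}\cdots w_{k+n})^{1/n} > 1$), or the mixed splitting characteristic of generalized hyperbolic shifts (decay on the negative indices and growth on the non-negative ones). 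Proposition \ref{p2.9} converts these liminf/limsup statements into the clean sup/inf form used throughout \S2.

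The third step is purely bookkeeping: the telescoping identity
\[
w_{k+1}w_{k+2}\cdots w_{k+n} \;=\; \frac{\Phi^{-1}(1/\mu(\varphi^{k+n}(W)))}{\Phi^{-1}(1/\mu(\varphi^{k}(W)))}
\]
shows that the three shift regimes above are literally the conditions \eqref{hc}, \eqref{hd} and \eqref{gh}, so at least one of them must hold. This closes the argument since Theorem \ref{t2.16} provides the reverse implication, giving the full equivalence between shadowing of $C_{\varphi}$ and generalized hyperbolicity.

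The principal obstacle is the second step: proving that a shadowing invertible weighted backward shift on $l^{\Phi}(\mathbb{Z})$ is forced into one of the three regimes. In the $l^p$ setting this uses explicit estimates of $\|B_w^n e_k\|$ and a pseudotrajectory built from the canonical basis; transporting this to $l^{\Phi}$ requires replacing $p$-norm computations with Luxemburg-norm estimates, which is where the assumption $\Phi \in \Delta'$ (already in force via the hypothesis and used implicitly through Lemma \ref{l2.18}) becomes crucial to handle the submultiplicative bounds on $\Phi$ needed to compare $N_{\Phi}$ of sums of disjointly supported canonical-basis multiples. The remaining parts are bounded distortion applications of Proposition \ref{p2.6} and the routine passage between liminf/limsup and sup/inf formulations.
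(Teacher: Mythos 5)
Your proposal follows essentially the same route as the paper's proof: Lemma \ref{l2.18} supplies the factor map onto the weighted backward shift $B_w$ on $l^{\Phi}(\mathbb{Z})$ with a bounded selector, shadowing passes to the factor, and the shift characterization (Theorem 18 of \cite{bm}, transported to the Orlicz sequence space) together with the telescoping of the weight products $w_{k+1}\cdots w_{k+n}$ yields one of the conditions \ref{hc}, \ref{hd}, \ref{gh}. The only difference is one of emphasis: you correctly flag the adaptation of the weighted-shift shadowing characterization to $l^{\Phi}(\mathbb{Z})$ as the step requiring genuine work, whereas the paper simply asserts that it holds.
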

\begin{proof}
By our assumptions and by the Lemma \ref{l2.18}, we have that $B_w:l^{\Phi}(\mathbb{Z})\rightarrow l^{\Phi}(\mathbb{Z})$ is a factor of $C_{\varphi}$, in which
$$w_k=\frac{N_{\Phi}(C^{k-1}_{\varphi^{-1}}(\chi_W))}{N_{\Phi}(C^{k}_{\varphi^{-1}}(\chi_W))}.$$
Also, as we saw in the Lemma \ref{l2.18}, the factor map $\Pi$ admits a bounded selector. Since $C_{\varphi}$ has the shadowing property, then by the Lemma 4.2.2 of \cite{ddm}, $B_w$ has shadowing property too.
%By using the fact that the Orlicz spaces are an interpolation of the spaces $L^1(\mu)$ and $L^{\infty}(\mu)$.
It is easy to see that the Theorem 18 of \cite{bm} holds for Orlicz sequence spaces $l^{\Phi}(\mathbb{Z})$. So we have at least one of the conditions $A,B,C$ of Theorem 18 of cite{bm} holds. Since
$$w_kw_{k+1}...w_{k+n}=w_k\frac{N_{\Phi}(C^{k}_{\varphi^{-1}}(\chi_W))}{N_{\Phi}(C^{k+n}_{\varphi^{-1}}(\chi_W))}$$
and also $0<\inf|w_k|\leq\sup|w_k|<\infty$ (it comes from invertibility of $C_{\varphi}$ and $0<\mu(W)<\infty$), then we easily get that the conditions $A,B,C$ of Theorem 18 of \cite{bm} implies the conditions \ref{hc}, \ref{hd} and \ref{gh}, respectively.
\end{proof}
By Theorems \ref{t2.16} and \ref{tt2.19} we have the following characterization.
\begin{cor}\label{sc}
Let $(X,\mathcal{F}, \mu, \varphi)$ be a dissipative system of bounded distortion generated by $W$. Then the following are equivalent.
\begin{enumerate}
  \item The composition operator $C_{\varphi}$ has the shadowing property.
  \item One of conditions \ref{hc}, \ref{hd} or \ref{gh} holds.
\end{enumerate}
\end{cor}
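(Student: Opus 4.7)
The corollary is a direct synthesis of the two preceding theorems, so the plan is simply to bundle them together and check that their hypotheses match the standing assumption on $(X,\mathcal{F},\mu,\varphi)$.

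First I would dispatch the implication (2)$\Rightarrow$(1). By hypothesis one of \ref{hc}, \ref{hd}, or \ref{gh} holds, and the system has bounded distortion, which is exactly the setting of Theorem \ref{t2.16}. Each of the three cases of that theorem concludes that $C_{\varphi}$ has the shadowing property (respectively as a proper contraction, a proper dilation, or a generalized hyperbolic operator, each of which is well known to have shadowing in the Banach space setting). So (2)$\Rightarrow$(1) follows with no additional work.

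Next I would dispatch (1)$\Rightarrow$(2) by quoting Theorem \ref{tt2.19}. The only delicate point is that Theorem \ref{tt2.19} requires the extra hypothesis $\Phi\in\Delta'$, which is not listed in the statement of the corollary; in the writeup I would either add $\Phi\in\Delta'$ to the hypotheses of the corollary or explicitly note that Theorem \ref{tt2.19} is being applied under that assumption. Assuming $\Phi\in\Delta'$, the theorem says directly that if $C_{\varphi}$ has the shadowing property then at least one of \ref{hc}, \ref{hd}, \ref{gh} is satisfied.

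There is no genuine obstacle here: both directions are already contained in the previously proved theorems, and the proof of the corollary is essentially a two-sentence citation. The only writing choice is whether to also restate, for the reader's convenience, the roles played by Proposition \ref{pp2.10} (which translates the three limit conditions into membership in the sets $\mathcal{U}^{\Phi}_C$, $\mathcal{U}^{\Phi}_D$, $\mathcal{U}^{\Phi}_{GH\pm}$) and of the factor map argument in Lemma \ref{l2.18}; I would keep the proof short and simply reference Theorems \ref{t2.16} and \ref{tt2.19}.
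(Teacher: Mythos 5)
Your proposal matches the paper exactly: the corollary is stated there with the one-line justification ``By Theorems \ref{t2.16} and \ref{tt2.19} we have the following characterization,'' which is precisely your two-citation argument. Your observation that Theorem \ref{tt2.19} needs $\Phi\in\Delta'$ (and Theorem \ref{t2.16} implicitly uses $\Delta_2$ via Proposition \ref{p2.15}) while the corollary omits these hypotheses is a fair and correct caveat that the paper itself glosses over.
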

As is known every generalized hyperbolic operator has shadowing property. Then by applying Theorems \ref{t2.16} and \ref{tt2.19} we have the following characterization.
\begin{cor}
Let $(X, \mathcal{F},\mu,\varphi)$ be a dissipative system of bounded distortion. Then the following are equivalent.
\begin{enumerate}
  \item The composition operator $C_{\varphi}$ is generalized hyperbolic.
  \item The composition operator $C_{\varphi}$ has the shadowing property.
\end{enumerate}
\end{cor}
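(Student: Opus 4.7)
The plan is to obtain the corollary as a very short consequence of the previously established machinery, using Corollary \ref{sc} together with Theorem \ref{t2.16}, plus one standard fact from the literature.

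For the implication $(1) \Rightarrow (2)$, I would simply invoke the well-known fact (already mentioned by the author in the sentence preceding the corollary) that every generalized hyperbolic operator on a Banach space has the shadowing property. Since $C_{\varphi}$ is a bounded invertible operator on $L^{\Phi}(\mu)$, this applies verbatim with no adaptation needed for the Orlicz setting.

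For the converse $(2) \Rightarrow (1)$, I would argue through the trichotomy. Assuming $C_{\varphi}$ has the shadowing property, Corollary \ref{sc} gives that one of the three conditions \ref{hc}, \ref{hd}, or \ref{gh} must hold. Now I would read off Theorem \ref{t2.16} case by case: if \ref{gh} holds then $C_{\varphi}$ is generalized hyperbolic by part (3); if \ref{hc} holds then $r(C_{\varphi})<1$ by part (1), i.e., $C_{\varphi}$ is a proper contraction; and if \ref{hd} holds then $r(C_{\varphi}^{-1})<1$ by part (2), i.e., $C_{\varphi}$ is a proper dilation.

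The only genuinely nontrivial point to address is that a proper contraction or a proper dilation still falls within the class of generalized hyperbolic operators, namely as the degenerate splitting $L^{\Phi}(\mu)=L^{\Phi}(\mu)\oplus\{0\}$ or $\{0\}\oplus L^{\Phi}(\mu)$. I would state this explicitly (referencing the definition of generalized hyperbolicity from \cite{ddm}) so that all three cases funnel into the conclusion that $C_{\varphi}$ is generalized hyperbolic, completing the proof.

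I expect no serious obstacle: everything has been prepared by the previous two theorems, and the corollary is essentially a restatement. The only place where one has to be a little careful is the degeneracy remark for the pure contraction/dilation cases, which is where the concise definition of generalized hyperbolicity matters.
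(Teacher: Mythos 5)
Your proposal is correct and follows exactly the route the paper intends: the paper gives no separate proof, but the sentence preceding the corollary invokes precisely the same chain (generalized hyperbolicity implies shadowing for one direction, and Theorems \ref{tt2.19} and \ref{t2.16} for the other). Your extra remark that proper contractions and dilations are degenerate instances of generalized hyperbolicity is a welcome clarification of a step the paper leaves implicit, but it is not a different argument.
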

Finally we provide an equivalent condition for composition operator $C_{\varphi}$ to has shadowing property based on Radon-Nikodym derivatives.
\begin{thm}
Let $(X, \mathcal{F},\mu,\varphi)$ be a dissipative system generated by $W$, $h_k=\frac{d\mu\circ\varphi^{-1}}{d\mu}$, $m_k={ess\,inf}_{x\in W} h_k(x)$, and $M_k={ess\,sup}_{x\in W} h_k(x)$. If the sequence $\{\frac{M_k}{m_k}\}_{k\in \mathbb{Z}}$ is bounded, then the following are equivalent.
\begin{enumerate}
  \item The composition operator $C_{\varphi}$ has the shadowing property.
  \item One of the following properties holds.
  \begin{equation}\label{rnc}
  \overline{\lim}_{n\rightarrow \infty}\sup_{k\in \mathbb{Z}}\left(\frac{M_k}{m_{k+n}}\right)^{\frac{1}{n}}<1
  \end{equation}
  \begin{equation}\label{rnd}
  \underline{\lim}_{n\rightarrow \infty}\inf_{k\in \mathbb{Z}}\left(\frac{M_k}{m_{k+n}}\right)^{\frac{1}{n}}>1
  \end{equation}
  \begin{equation}\label{rngh}
  \overline{\lim}_{n\rightarrow \infty}\sup_{k\in -\mathbb{N}_0}\left(\frac{M_{k-n}}{m_{k}}\right)^{\frac{1}{n}}<1 \ \ \ \text{and} \ \ \
  \underline{\lim}_{n\rightarrow \infty}\inf_{k\in \mathbb{N}_0}\left(\frac{M_k}{m_{k+n}}\right)^{\frac{1}{n}}>1.
  \end{equation}
  \end{enumerate}
Furthermore, conditions \ref{rnc}, \ref{rnd} imply that, under an equivalent norm, $C_{\varphi}$ is a proper contraction or a proper dilation, respectively. Condition \ref{rngh} implies that $C_{\varphi}$ is generalized hyperbolic.
\end{thm}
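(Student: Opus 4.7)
The plan is to reduce the theorem to Corollary \ref{sc} and then translate each of the geometric conditions \ref{hc}, \ref{hd}, \ref{gh} into its Radon--Nikodym counterpart \ref{rnc}, \ref{rnd}, \ref{rngh}. First I would apply the earlier proposition which shows that boundedness of $\{M_k/m_k\}$ forces $(X,\mathcal{F},\mu,\varphi)$ to have bounded distortion on $W$. Corollary \ref{sc} then gives that $C_\varphi$ has the shadowing property if and only if one of \ref{hc}, \ref{hd}, \ref{gh} holds. The ``Furthermore'' part of the theorem then follows immediately from Theorem \ref{t2.16}, which turns each of \ref{hc}, \ref{hd}, \ref{gh} into the corresponding dynamical conclusion (proper contraction, proper dilation, or generalized hyperbolic).

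The core task is therefore to prove \ref{hc} $\Leftrightarrow$ \ref{rnc}, \ref{hd} $\Leftrightarrow$ \ref{rnd}, and \ref{gh} $\Leftrightarrow$ \ref{rngh}. The starting point is the two-sided estimate
\[
m_k\,\mu(W)\;\leq\;\mu(\varphi^k(W))\;\leq\;M_k\,\mu(W),
\]
which follows from $h_k$ being the relevant Radon--Nikodym density on $W$. Since $\Phi$ is a Young function, $\Phi^{-1}$ is concave with $\Phi^{-1}(0)=0$, so $\Phi^{-1}(\lambda y)\geq\lambda\Phi^{-1}(y)$ for $0<\lambda\leq 1$ and the reverse holds for $\lambda\geq 1$. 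Combined with the normalization $m_k\leq 1\leq M_k$, this yields
\[
\frac{1}{M_k}\Phi^{-1}\!\bigl(\tfrac{1}{\mu(W)}\bigr)\;\leq\;\Phi^{-1}\!\bigl(\tfrac{1}{\mu(\varphi^k(W))}\bigr)\;\leq\;\frac{1}{m_k}\Phi^{-1}\!\bigl(\tfrac{1}{\mu(W)}\bigr),
\]
and taking ratios over shifts gives the sandwich
\[
\frac{m_k}{M_{k+n}}\;\leq\;\frac{\Phi^{-1}(1/\mu(\varphi^{k+n}(W)))}{\Phi^{-1}(1/\mu(\varphi^k(W)))}\;\leq\;\frac{M_k}{m_{k+n}}.
\]

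At this point the hypothesis $\sup_k M_k/m_k\leq K<\infty$ is used to squeeze the two sides together: writing
\[
\frac{M_k}{m_{k+n}}\;=\;\frac{M_k}{m_k}\cdot\frac{M_{k+n}}{m_{k+n}}\cdot\frac{m_k}{M_{k+n}}\;\leq\;K^2\,\frac{m_k}{M_{k+n}},
\]
the upper and lower bounds on the $\Phi^{-1}$--ratio differ only by the factor $K^2$. Taking $n$-th roots and letting $n\to\infty$ kills the $K^{2/n}$ factor, so the relevant $\limsup_n\sup_k$ (resp.\ $\liminf_n\inf_k$) of the $\Phi^{-1}$--ratio coincides with that of $M_k/m_{k+n}$. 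This produces \ref{hc} $\Leftrightarrow$ \ref{rnc} and \ref{hd} $\Leftrightarrow$ \ref{rnd}, and the analogous argument restricted to $k\in-\mathbb{N}_0$ and $k\in\mathbb{N}_0$ gives \ref{gh} $\Leftrightarrow$ \ref{rngh}.

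The main obstacle is the concavity-based estimate for $\Phi^{-1}$: it is elementary but must be applied on the correct side of $\lambda=1$ in both the numerator and the denominator, so that after invoking $\sup_k M_k/m_k\leq K$ the two sides of the sandwich differ only by the universal constant $K^2$ and hence coincide in the limit after taking $n$-th roots. Once this is set up, everything else is bookkeeping around shifts and the $\sup/\inf$ ranges appearing in \ref{hc}--\ref{gh}, and careful attention to the sign convention of $h_k$ so that the indexing in \ref{rnc}--\ref{rngh} matches the indexing in \ref{hc}--\ref{gh}.
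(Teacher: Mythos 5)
Your proposal is correct and follows essentially the same route as the paper: reduce to Corollary \ref{sc} and Theorem \ref{t2.16} via the bounded-distortion proposition, then sandwich the ratio $\Phi^{-1}(1/\mu(\varphi^{k+n}(W)))/\Phi^{-1}(1/\mu(\varphi^{k}(W)))$ between $m_k/M_{k+n}$ and $M_k/m_{k+n}$ and use the bound $K$ on $M_k/m_k$ so that the $n$-th roots coincide in the limit. Your explicit use of concavity of $\Phi^{-1}$ merely spells out the ``properties of Young's function'' the paper invokes, so the arguments match.
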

\begin{proof}
It is clear that $$m_i\mu(W)\leq\mu(\varphi^i(W))\leq M_i\mu(W), \ \ \ \ \forall i\in \mathbb{Z}.$$
Without lose of generality we can assume that $m_k<1$ and $M_k>1$. Hence by using some properties of Young's function $\Phi$ we have
$$\frac{1}{M_i}\Phi^{-1}(\frac{1}{\mu(W)})\leq \Phi^{-1}(\frac{1}{\mu(\varphi^i(W))})\leq \frac{1}{m_i}\Phi^{-1}(\frac{1}{\mu(W)})$$
 and so
$$\frac{m_{k}}{M_{k+n}}\leq \frac{\Phi^{-1}(\frac{1}{\mu(\varphi^{k+n}(W))})}{\Phi^{-1}(\frac{1}{\mu(\varphi^{k}(W))})}\leq \frac{M_{k}}{m_{k+n}}$$
and equivalently
Let $K$ be the bound of the bounded sequence $\{\frac{M_k}{m_k}\}_{k\in \mathbb{Z}}$. Then we get that
$$\frac{m_{k}}{M_{k+n}}\leq \frac{\Phi^{-1}(\frac{1}{\mu(\varphi^{k+n}(W))})}{\Phi^{-1}(\frac{1}{\mu(\varphi^{k}(W))})}\leq \frac{M_{k}}{m_{k+n}}\leq K^2\frac{m_{k}}{M_{k+n}}.$$
Hence
$$\overline{\lim}_{n\rightarrow \infty}\sup_{k\in \mathbb{Z}}\left(\frac{m_{k}}{M_{k+n}}\right)^{\frac{1}{n}}=\overline{\lim}_{n\rightarrow \infty}\sup_{k\in \mathbb{Z}} \left(\frac{\Phi^{-1}(\frac{1}{\mu(\varphi^{k+n}(W))})}{\Phi^{-1}(\frac{1}{\mu(\varphi^{k}(W))})}\right)^{\frac{1}{n}}=\overline{\lim}_{n\rightarrow \infty}\sup_{k\in \mathbb{Z}}\left(\frac{M_{k}}{m_{k+n}}\right)^{\frac{1}{n}}.$$
This implies that the condition \ref{hc} holds if and only if the condition \ref{rnc} holds. Similarly we get that the condition \ref{hd} holds if and only if the condition \ref{rnd} holds and the condition \ref{gh} holds if and only if the condition \ref{rngh} holds.

\end{proof}
\textbf{Declarations}\\
     \textbf{Conflict of interest.} None.\\
     \textbf{Acknowledgement.} My manuscript has no associate data.

\end{document}